\def \R {\mathbb{R}}
\def \N {\mathbb{N}}
\def \ds {\displaystyle}
\newtheorem{theorem}{Theorem}[section]
\newtheorem{lemma}{Lemma}[section]
\newtheorem{proposition}{Proposition}[section]
\newtheorem{definition}{Definition}[section]
\newtheorem{remark}{Remark}[section]
\title[Existence of exponential attractor via the $1$-trajectory method] {Existence of exponential attractor to a family of problems dominated by a perturbation of $p(x)$-laplacian with localized large diffusion via the $l$-trajectories method}
\numberwithin{equation}{section} \numberwithin{theorem}{section}
\address[V. L. Carbone]{Universidade Federal de S\~{a}o
Carlos, Departamento de Matem\'atica, 13565-905 S\~{a}o
Carlos SP, Brazil.}
\email{carbone@dm.ufscar.br}
\address[T. R. S. Couto]{Universidade Federal de S\~{a}o
Carlos, Departamento de Matem\'atica, 13565-905 S\~{a}o
Carlos SP, Brazil.}
\email{thays.trsc@gmail.com}
\date{\today}
\thanks{$^\star$Research partially
supported by CAPES, Brazil}
\begin{document}

\maketitle

\centerline{\scshape Vera Lúcia Carbone, Thays Regina Santana Couto$^\star$}
\medskip
{\footnotesize
 \centerline{ Universidade Federal de S\~{a}o
Carlos}
   \centerline{Departamento de Matem\'atica}
   \centerline{ 13565-905 S\~{a}o
Carlos SP, Brazil}
}


\begin{abstract}
This article is devoted to the study of the existence of an 
exponential attractor for a family of problems, in which diffusion $d_{\lambda}$  blows up in localized regions inside the domain,
\begin{equation*}
  \begin{cases}
  \ds
u_t^\lambda-\mathrm{div}(d_\lambda(x)(|\nabla u^\lambda|^{p(x)-2}+\eta ) \nabla u^\lambda)+
|u^\lambda|^{p(x)-2}u^\lambda=B(u^\lambda), & \mbox{ in } \Omega \\
u^\lambda = 0,  & \mbox{ on } \partial\Omega\\
u^\lambda(0)=u^\lambda_0 \in L^2(\Omega),&
  \end{cases}
\end{equation*}
 and their limit problem
via the $l$-trajectory method.
\vskip .1 in \noindent {\it Mathematical Subject Classification 2010:}  35K55, 37L30, 35B40, 35B41.
\newline {\it Key words and phrases:} global attractor, exponential attractor, $l-$trajectory, limit problem, localized large diffusion.
\end{abstract}


\maketitle


\tableofcontents

\section{Introduction}

The existence of exponential attractors is an important feature for a nonlinear systems of differential equations, although not unique, thanks to the exponential rate of attraction exponential attractors are more robust under perturbations than the global attractor.
Several authors has been studied  the existence of an exponential attractor, see \cite{dalibor, Efendi-miran, temam}.  Málek and Pra$\breve{z}$ák in \cite{MalekPrazak} introduced the method of $l$- trajectories, which enables us to discuss the existence of exponential attractor for a nonlinear system of differential equations written as an abstract evolutionary problem,

 \begin{align*}
 \begin{cases}
 u'(t)=F(u(t)),\quad t>0,\,\,\text{in}\,\, X, \\
 u(0)=u_0,
 \end{cases}
 \end{align*}
 where $X$ is an infinite dimensional Banach space, $F:X\to X$ is a nonlinear operator, and $u_0\in X.$

In \cite{MatsuuraOtani} the authors used the method of $l$- trajectories to construct an exponential attractor for a dynamical system associated with equation

 \begin{equation}\label{eqMa_O}
 \begin{cases}
 u_t-\text{div}(a(x,u,\nabla u)) + f(u)=g(x),\,\, \quad(x,t)\in\Omega\times (0,+\infty)\\
 u(\cdot,t)|_{\partial\Omega}=0,\hspace{4,2cm}\,\, t\in(0,+\infty)\\
 u(x,0)=u_0(x), \hspace{3,9cm} x\in\Omega,
 \end{cases}
 \end{equation}
 where $a: \Omega\times\mathbb{R}^1\times\mathbb{R}^n\to \mathbb{R}^n$  satisfies some properties, among them

\begin{equation}\label{Equ_a}
|a(x,u,\nabla u)- a(x,v,\nabla v)|_{\mathbb{R}^n}\leqslant \beta_0|\nabla u-\nabla v|_{\mathbb{R}^n}+\beta_1|u-v|,
\end{equation}
 which a typical example is
 $$u_t-\text{div}(((|\nabla u|^2+\varepsilon)^{\frac{p-2}{2}}+\eta)\nabla u)+ |u|^qu-|u|^ru = g(x),$$
 where $p\in]1,2[,$ $\varepsilon>0,$ $q>r\geqslant 0.$

In the literature, several authors have studied variations of the $p(x)$- Laplacian equations, in \cite{Niu}, under suitable conditions, the problem $$u_t - \text{div}(|\nabla u|^{p(x)-2}\nabla u) + f(x,u)=g$$ admits an infinite-dimensional global attractor. Simsen, in \cite{jacson}, proves  the existence of a global attractor for a $p(x)$- Laplacian inclusion of the form $$u_t - \text{div}(|\nabla u|^{p(x)-2}\nabla u) + \alpha |u|^{p(x)-2}u\in F(u)+h,$$ $\alpha=0,1.$  In  \cite{CCK} the autors assure continuity of the dynamics in a localized large diffusion problem $$u^{\varepsilon}_t - \text{div}(p_{\varepsilon}(x)\nabla u^{\varepsilon}) + \lambda u ^{\varepsilon}=f(u ^{\varepsilon})$$  subjected to nonlinear Neumann boundary conditions.

In \cite{VCK} the authors study   the asymptotic behavior of the solutions of the  family of equations

\begin{equation}\label{prob_lambda}
  \begin{cases}
  \ds
u_t^\lambda-\mathrm{div}(d_\lambda(x)(|\nabla u^\lambda|^{p-2}) \nabla u^\lambda)+
|u^\lambda|^{p-2}u^\lambda=B(u^\lambda), & \mbox{ in } \Omega \\
u^\lambda = 0,  & \mbox{ on } \partial\Omega\\
u^\lambda(0)=u^\lambda_0 \in L^2(\Omega).&
  \end{cases}
\end{equation}

The existence of an exponential attractor to the problem \eqref{prob_lambda} is still an unresolved issue. We will prove the existence of an exponencial attractor  for a perturbation of the problem \eqref{prob_lambda}, in the more general case, where the constant $p$ is replaced by a function $p(x).$

Let $\Omega \subset \R^n$ be an open, bounded, connected and smooth subset, with $n\geqslant 1$. Consider the following family of problems

\begin{equation}\label{prob_plambda}
  \begin{cases}
  \ds
u_t^\lambda-\mathrm{div}(d_\lambda(x)(|\nabla u^\lambda|^{p(x)-2}+\eta ) \nabla u^\lambda)+
|u^\lambda|^{p(x)-2}u^\lambda=B(u^\lambda), & \mbox{ in } \Omega \\
u^\lambda = 0,  & \mbox{ on } \partial\Omega\\
u^\lambda(0)=u^\lambda_0 \in L^2(\Omega),&
  \end{cases}
\end{equation}
for $\lambda \in (0,1]$,  where $p(\cdot)\in C(\Omega)$ satisfies
$$ 2< p^{-}:= \inf \text{ ess }p\leqslant p(x) \leqslant \sup\text{ ess }p:=p^{+},
$$
 $B:L^2(\Omega)\to L^2(\Omega)$ is globally Lipschitz and $\eta > 0$.

Let $\Omega_0$ be an open subset smooth of $\Omega$ with $\overline{\Omega}_0 \subset \Omega$ e $\Omega_0 = \underset{i=1}{\overset{m}\cup} \Omega_{0,i}$ where $m$ is a positive integer and $\Omega_{0,i}$ are smooth subdomains of $\Omega$ satisfying $\overline{\Omega}_{0,i}\cap\overline{\Omega}_{0,j}=\emptyset$, for $i\neq j$. Define $\Omega_1=\Omega\setminus\overline{\Omega}_0$, $\Gamma_{0,i}=\partial\Omega_{0,i}$ and $\Gamma_0=\underset{i=1}{\overset{m}\cup}\Gamma_{0,i}$ as the boundaries of $\Omega_{0,i}$ and $\Omega_0$, respectively. Notice that $\partial \Omega_1=\Gamma\cup\Gamma_0.$

In addition, the diffusion coefficients  $d_{\lambda}: \Omega \subset \R^n \to(0,\infty)$ are bounded and smooth functions in $\Omega$, satisfying
\begin{align*}
0 < m_0 \leqslant d_{\lambda}(x) \leqslant M_\lambda,
\end{align*}
for all $x\in\Omega$ and $0<\lambda\leqslant 1$. We also assume that the diffusion is large in $\Omega_0$ as $\lambda \to 0$, or more precisely,

\begin{equation}\label{convplambda}
d_{\lambda}(x) \xrightarrow[]{\lambda\to0} \left\{%
\begin{array}{ll}
    d_0(x), & \hbox{ uniformly on }\; {\Omega}_1; \\
    \;\;\;\infty, & \hbox{ uniformly on compact subsets of }\;\Omega_0,
\end{array}%
\right.
\end{equation}
where $d_0:{\Omega}_1\to (0,\infty)$ is a smooth function with $0 < m_0 \leqslant d_0(x) \leqslant M_0$, for all $x\in {\Omega}_1$.


If in a reaction-diffusion process the diffusion coefficient behaves as expressed in \eqref{convplambda}  we expect that the solutions of \eqref{prob_plambda} will become  approximately constant on $\Omega_0$. For this reason, suppose that  $u^\lambda$ converges to $u$ as $\lambda \to 0$, in some sense, and that $u$ takes, on $\Omega_0$, a time dependent spatially constant value, which we will denote by $u_{\Omega_0}(t)$.

In this context we will obtain the equation that describes the limit problem.  Notice that, since the limit function $u$ is in $W^{1,p(x)}(\Omega)$, its constant value in $\Omega_0$, $u_{\Omega_0}(t)$ cannot be arbitrary. Also, in the boundary $\Gamma_0=\partial \Omega_0$, we must be $u_{\mid_{\Gamma_0}}=u_{\Omega_0}(t)$. In $\Omega_1$,we have
\begin{equation}\label{EqOmega1}
u_t^\lambda - \mathrm{div}(d_{\lambda}(x)(|\nabla u^\lambda|^{p(x)-2}+\eta)\nabla u^\lambda) + |u^\lambda|^{p(x)-2}u^\lambda
= B(u^\lambda).
\end{equation}
From properties of  convergence of the  function $d_{\lambda}(x)$ in $\Omega_1$, when $\lambda\to0$, we get
$$
u_t - \mathrm{div}(d_0(x)(|\nabla u^\lambda|^{p(x)-2}+\eta)\nabla u^\lambda) + |u|^{p(x)-2}u =
B(u), \hbox{ for}\,\, u \in W^{1,p(x)}(\Omega).
$$
Integrating \eqref{EqOmega1} on $\Omega_0$, from Gauss's  Divergence Theorem it follows that

$$
\underset{\Omega_0}\int u_t^\lambda\,\mathrm{d}x +
\underset{\Gamma_0}\int d_{\lambda}(x) (|\nabla u^\lambda|^{p(x)-2}+\eta)
\frac{\partial u^\lambda}{\partial \vec n } \,\mathrm{d}x +
\underset{\Omega_0}\int |u^\lambda|^{p(x)-2} u^\lambda \,\mathrm{d}x =
\underset{\Omega_0}\int B(u^\lambda)\,\mathrm{d}x,
$$
where $\vec{n}$ denotes the unit inward normal to $\Omega_0$ in the surface integral.
Taking the limit as $\lambda \to 0$, we get the following ordinary differential equation
$$
\dot{u}_{\Omega_0}(t) + \frac{1}{|\Omega_0|}
\left(\underset{\Gamma_0}\int d_0(x)|\nabla u|^{p-2}\frac{\partial
u}{\partial \vec n} \,\mathrm{d}x + \int_{\Omega_0} |u_{\Omega_0}(t)|^{p-2}
u_{\Omega_0}(t) \,\mathrm{d}x\,\right)= B(u_{\Omega_0}(t)).
$$

With these considerations we can write the limiting problem in the following way

\begin{align}\label{prob_pzero}
\begin{cases}
 u_t -\mathrm{div}(d_0(x)(|\nabla u|^{p(x)-2}+\eta)\nabla u) + |u|^{p(x)-2}u=B(u), &    \hbox{ in } \Omega_1 \\ %
 u_{|_{\Omega_{0,i}}}  :=  u_{\Omega_{0,i}},& \hbox{ in}\, \Omega_{0,i}\\
 \ds{\dot{u}_{\Omega_{0,i}}}\!\! + \frac{1}{|\Omega_{0,i}|}
\left[\underset{\Gamma_{0,i}}
    \int\!\! d_0(x)(|\nabla u|^{p(x)-2}+\eta)\frac{\partial u}{\partial \vec n} \,\mathrm{d}x + \underset{\Omega_{0,i}}\int
|u_{\Omega_{0,i}}|^{p(x)-2}u_{\Omega_{0,i}} \,\mathrm{d}x
    \right]= B(u_{\Omega_{0,i}}) &
     \\
 u=0, &  \hbox{ on } \partial\Omega\\
 u(0)=u_0. &
 \end{cases}
\end{align}

 The notion of exponential attractor has been proposed in \cite{Eden-foias-nico-temam}:

\begin{definition} Let $(\rm{E}, d_{\rm{E}})$ be a metric space. A subset $\mathcal{M}\subset\rm{E}$ is an exponential attractor for a semigroup $\{A(t);\, t\geqslant 0\}$ if $\mathcal{M}\neq \emptyset$ is compact, has finite fractal dimension $\rm{dim_f}(\mathcal{M})<\infty$, it is semi-invariant, that is, $A(t)\mathcal{M}\subset\mathcal{M}$ for all $t\geqslant 0,$ and for every bounded subset $D\subset \rm{E}$ there exist constants $c_1, c_2>0$ such that
$$\rm{dist_H}(A(t)D,\mathcal{M})\leqslant c_1e^{-c_2t}, \,\,\text{for all}\,\, t\geqslant 0,$$
where $\rm{dim_f}(A)=\underset{\varepsilon\to 0}\limsup\,\,\displaystyle\frac{\log \rm{N}_{\varepsilon}^{E}(A)}{\log{(\frac{1}{\varepsilon}})},$ and $\rm{N}_{\varepsilon}^{E}(A)$ denotes the minimal number of $\varepsilon$- balls in the space $\rm{E}$ with centers in $A$ needed to cover the subset $A\subset \rm{E}$. Here, $\rm{dist_H}(\cdot, \cdot)$ is the Hausdorff semidistance in $\rm{E}$; that is $\rm{dist_H}(A, B)=\underset{a\in A}\sup\underset{b\in B}\inf d_{\rm{E}}(a,b)$.
\end{definition}

Compared with the global attractor, an exponential attractor is expected to be more robust to perturbations. However that, differently from to the global attractor, an exponential attractor is not necessarily unique, so that its construction relies upon an algorithm. Our contribution in this paper is to ensure, via the $l$-trajectory method, the existence of an exponential attractor for \eqref{prob_plambda}-\eqref{prob_pzero} or, more generally, for evolution equations of the form
\begin{equation*}
  \begin{cases}
  \ds
u_t-\Delta_{\eta,p(x)}u+ |u|^{p(x)-2}u=B(u), & \mbox{ in } \Omega \\
u = 0,  & \mbox{ on } \partial\Omega\\
u(0)=u_0 \in L^2(\Omega),&
  \end{cases}
\end{equation*}
with $\Delta_{\eta,p(x)}u= \mathrm{div}(d(x)(\eta+|\nabla u|^{p(x)-2}) \nabla u)$ where $\eta d(\cdot)$ is the rate of linear diffusion, while $d(\cdot)$ and $p(\cdot)$ allows for capturing the nonlinear diffusion. This problem involves variable exponents which often appears in applications in electrorheological fluids \cite{Ruzicka,Rajagopal} and image processing \cite{Baravdish, Zhichang}.

Although the property \eqref{Equ_a} is not satisfied for $$a(x, u, \nabla u )=d(x)(|\nabla u|^{p(x)-2}+\eta)\nabla u,$$ since $p(x)>2$, it is possible to estimate $|\nabla u|$ and $|u|$ for almost all $ (t,x)\in [0,T]\times\Omega$, where $u$ is solution of  \eqref{prob_pzero} and still use the method of $l$-trajectories to ensure the existence of an exponential attractor for the problem  \eqref{prob_pzero}, similarly to \eqref{prob_plambda}.  

With this in mind, the paper is organized as follows. In Section 2 we define the operators $A_{\lambda}$ and $A_0$,  from the main part of equations, and their properties allows us to ensure the existence of a strong solution to  \eqref{prob_plambda} and \eqref{prob_pzero}. We find uniform estimates for the solutions in Section 3, in special the  Lemma 3.4 which gives us the conditions to proof  Propositon 4.1, a fundamental importance result on Section 4 to guarantee the finite fractal dimension of the attractor.


In section 4 we prove the main result of the paper.
\begin{theorem}\label{Teorema-existencia-exponential-attractor}
 The dynamical systems associate to \eqref{prob_plambda}-\eqref{prob_pzero} possesses the global attractor $\mathcal{A}_\lambda$, for all $\lambda\in[0,1]$,  which is bounded in $L^2(\Omega)$. Moreover, for each $\lambda\in[0,1]$ there exists a positively invariant subset $B$ of $L^2(\Omega)$ such that $\mathcal{A}_{\lambda}\subset B$ and the dynamical systems $(\{T_{\lambda}(t)\}_{t\geqslant 0}, B)$ admits an exponential attractor $\mathcal{E}_\lambda$.
\end{theorem}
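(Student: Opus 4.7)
The proof will implement, for each $\lambda\in[0,1]$ separately, the $l$-trajectories method of Málek and Pražák in the phase space $L^2(\Omega)$, using the dissipativity and regularity properties of the operators $A_\lambda$ and $A_0$ constructed in Section 2 together with the uniform estimates of Section 3. Existence of the global attractor $\mathcal{A}_\lambda$ bounded in $L^2(\Omega)$ follows from the standard scheme: the energy identity yields an absorbing ball in $L^2(\Omega)$, the regularization toward a $W^{1,p(x)}$-type space for $t>0$ (afforded by $\eta>0$ and $p(x)>2$) gives asymptotic compactness, and continuity of $T_\lambda(t)$ on $L^2(\Omega)$ closes the argument.

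For the exponential attractor I would first introduce the space of $l$-trajectories $X_l := L^2(0,l;L^2(\Omega))$ and the shift map $L_l:X_l\to X_l$, $(L_lu)(\tau):=u(\tau+l)$. An absorbing ball in $L^2(\Omega)$ lifts, by integrating in time, to a set $\mathcal{B}_l\subset X_l$ that is bounded, absorbing, and positively invariant under $L_l$. The positively invariant set $B\subset L^2(\Omega)$ containing $\mathcal{A}_\lambda$ required by the statement is then obtained as the closure of $\{u(t): u\in \mathcal{B}_l,\ t\in[0,l]\}$ in $L^2(\Omega)$, which is bounded thanks to the uniform Section 3 estimates.

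The central step, where I expect the main difficulty to lie, is to establish the smoothing (discrete squeezing) estimate on $\mathcal{B}_l$: to exhibit a Banach space $Y$ compactly embedded into $X_l$ and a constant $C>0$ such that, for every pair $u,v$ of $l$-trajectories in $\mathcal{B}_l$,
\begin{equation*}
\|L_l u - L_l v\|_{Y} \;\leq\; C\,\|u - v\|_{X_l}.
\end{equation*}
This is exactly the role played by Lemma 3.4 and Proposition 4.1: the monotonicity and coercivity of the perturbed $p(x)$-Laplacian, combined with the global Lipschitz character of $B$ and the $L^\infty$ bounds on $|\nabla u|$ and $|u|$ for $(t,x)\in[0,T]\times\Omega$, allow one to compensate for the failure of hypothesis \eqref{Equ_a} in the regime $p(x)>2$ and to control the difference of two solutions in a stronger norm by their $L^2$-in-time distance. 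The obstruction here is twofold: the nonlinearity is genuinely non-Lipschitz, so standard energy estimates for $u^\lambda - v^\lambda$ cannot be closed directly, and the bounds must be uniform in $\lambda\in[0,1]$ down to the degenerate limit \eqref{prob_pzero}, where the PDE on $\Omega_1$ is coupled with an ODE on each component $\Omega_{0,i}$ through the integral flux term.

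Once the smoothing estimate is available, the abstract Málek-Pražák theorem produces an exponential attractor $\mathcal{E}_l\subset X_l$ for the discrete dynamical system $(L_l,\mathcal{B}_l)$, of finite fractal dimension in $X_l$. Proposition 4.1 also supplies Lipschitz continuity of the evaluation map $e:X_l\to L^2(\Omega)$, $e(u):=u(l)$, so that $\mathcal{E}_\lambda:=\bigcup_{t\in[0,l]} T_\lambda(t)\,e(\mathcal{E}_l)$ is compact, positively invariant under $\{T_\lambda(t)\}_{t\geqslant 0}$ and has finite fractal dimension in $L^2(\Omega)$. Interpolating the exponential attraction rate to continuous time $t\in[nl,(n+1)l]$ through the Lipschitz dependence on initial data transfers exponential attraction from $\mathcal{E}_l$ to $\mathcal{E}_\lambda$, concluding the proof.
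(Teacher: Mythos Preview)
Your proposal is correct and follows essentially the same route as the paper: verify the hypotheses of the M\'alek--Pra\v{z}\'ak abstract theorems on the trajectory space $L^2(0,1;H)$ by using the $L^\infty$ bounds of Lemma~\ref{lema1-2} to obtain the smoothing/Lipschitz estimate of Proposition~4.1, check Lipschitz continuity of the evaluation map and H\"older time-continuity of $L(t)$, and then pull the exponential attractor back to $L^2(\Omega)$ via $e$. One small overstatement: the theorem is proved for each $\lambda$ separately and does not require the constants in the smoothing estimate to be uniform in $\lambda\in[0,1]$, so the ``second obstruction'' you flag is not actually present in this argument.
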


\section{Existence of solutions}

In this section we present the operators associated  with the problems \eqref{prob_plambda} and \eqref{prob_pzero}, establish some of its properties and we assures the existence of a unique solution for \eqref{prob_plambda}-\eqref{prob_pzero}.

 We will consider the following spaces and notations.

 \begin{align*}
   &V:= W^{1, p(x)} _{0}(\Omega), \quad\, V_0:=W^{1, p(x)} _{\Omega_{0},0}(\Omega):= \{ u \in W ^ {1,p(x)}_0 (\Omega): u \hbox { is constant in } \Omega_{0} \},  \\
   & H:= L^{2}(\Omega), \qquad\quad H_0:=L^2_{\Omega_{0}}(\Omega):=\{ u \in L^2 (\Omega): u \hbox{ is constant in } \Omega_{0} \}.
   \end{align*}

The space $V_0$ is equipped with the norm 
$$\|v\|_V:= \|v\|_{L^{p(x)}(\Omega)} + \|\nabla v\|_{L^{p(x)}(\Omega)}, $$
where $L^{p(x)}(\Omega)=\{v:\Omega\to \R; v \text{ is measurable and }\int_{\Omega}|v(x)|^{p(x)}\,dx<\infty\}$ is a Banach space with the norm

$$\|v\|_{L^{p(x)}(\Omega)}=\|v\|_{p(x)}:= \inf\left\{ \lambda>0: \rho\biggl(\frac{v}{\lambda}\biggr)\leqslant 1\right\}
$$
with $\rho_{p}(v)$, or simply $\rho(v),$ denoting $ \int_{\Omega}|v(x)|^{p(x)}\,dx$.

 For $v\in L^{p(x)}(\Omega)$ it is important to emphasize the following estimates
\begin{equation}\label{propriedd.p(x).1}
  \min\{\rho(v)^{\frac{1}{p^{-}}}, \rho(v)^{\frac{1}{p^{+}}} \}\leqslant \|v\|_{p(x)}\leqslant\max\{\rho(v)^{\frac{1}{p^{-}}}, \rho(v)^{\frac{1}{p^{+}}}\}.
\end{equation}

Note that $V$ and $V_0$ are reflexive Banach spaces, $V$ is dense in  Hilbert space  $H$ and $V_0$ is dense in Hilbert space $H_0$. Moreover  $V\hookrightarrow\hookrightarrow H\hookrightarrow V'$ which implies $V_0\hookrightarrow\hookrightarrow H_0\hookrightarrow V_0'.$

Consider the operators $A_\lambda: V\to V^{'}$ and $A_0: V_0\to V_0^{'}$ given, respectively, by

\begin{align*}
  \langle A_\lambda u,v\rangle_{V',V}&:=\int_{\Omega}-\text{div} (d_\lambda(x)(|\nabla u|^{p(x)-2}+\eta))v\, dx + \int_{\Omega} |u|^{p(x)-2} u v\, dx\\
  &\,\,=\underset{\Omega}\int d_\lambda(x)(|\nabla u|^{p(x)-2}+\eta)\nabla u\nabla v\,\,dx + \underset{\Omega}\int |u|^{p(x)-2} u v\,\,dx, \quad \forall v\in V,
\end{align*}

\begin{align*}
  \langle A_0 u,v\rangle_{V_0^{'},V_0}:=\underset{\Omega_1}\int -\text{div} (d_0(x)(|\nabla u|^{p(x)-2}&+\eta)\nabla u) v\,\,dx + \underset{\Omega}\int |u|^{p(x)-2} u v \hspace{2cm}\\&+ \int_{\Gamma_0}d_0(x)(|\nabla u|^{p(x)-2}+\eta)\frac{\partial u}{\partial\vec{n}}v\,\,dx, \quad \forall v\in V_0.
\end{align*}

Following the same ideas from \cite{VCK}, we can prove that
\begin{theorem}\label{Teo:H2Artigo2}
For $\lambda \in [0,1]$ the operator $A_\lambda$ is monotone, hemicontinuous and coercive.
\end{theorem}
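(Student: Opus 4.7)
The plan is to verify monotonicity, hemicontinuity and coercivity separately, exploiting the natural decomposition of $A_\lambda$ into three pieces: the degenerate $p(x)$-Laplacian term with symbol $d_\lambda(x)|\nabla u|^{p(x)-2}\nabla u$, the linear diffusion $\eta\, d_\lambda(x)\nabla u$, and the reaction $|u|^{p(x)-2}u$. For $\lambda\in(0,1]$ the argument is the standard one for monotone operators adapted to the $p(x)$ setting. For $\lambda=0$, since $A_0$ is defined on $V_0$ via a mixed strong/boundary expression, I would first apply Gauss's theorem on $\Omega_1$, using that $v\in V_0$ takes a single constant value on each $\Omega_{0,i}$ and that $v=0$ on $\Gamma$, to rewrite
\[
\langle A_0u,v\rangle_{V_0',V_0}=\int_{\Omega_1}d_0(x)(|\nabla u|^{p(x)-2}+\eta)\nabla u\cdot\nabla v\,dx+\int_\Omega|u|^{p(x)-2}u\,v\,dx;
\]
the surface term produced by integration by parts on $\Gamma_0$ cancels exactly the boundary integral in the original definition of $A_0$, so the reduced form is structurally identical to that of $A_\lambda$, and all three properties can be checked on the same footing.

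For monotonicity I would apply the classical pointwise inequality $(|\xi|^{q-2}\xi-|\zeta|^{q-2}\zeta)\cdot(\xi-\zeta)\geqslant 0$ (valid for $q\geqslant 1$) with $q=p(x)$ to the pairs $(\nabla u,\nabla v)$ and $(u,v)$. Multiplication by $d_\lambda(x)\geqslant m_0>0$ preserves the sign, and the linear-diffusion term contributes the explicitly nonnegative integrand $\eta\, d_\lambda(x)|\nabla u-\nabla v|^2$; summing the three pieces yields $\langle A_\lambda u-A_\lambda v,u-v\rangle\geqslant 0$. Hemicontinuity of $t\mapsto\langle A_\lambda(u+tv),w\rangle$ follows by dominated convergence: the integrand depends continuously on $t$ pointwise a.e.\ in $\Omega$, and on any bounded $t$-interval the pointwise bounds $|\nabla(u+tv)|^{p(x)-1}\leqslant C(|\nabla u|^{p(x)-1}+|\nabla v|^{p(x)-1})$ and the analogous one for $|u+tv|^{p(x)-1}$, combined with H\"older's inequality in the pair $L^{p(x)}/L^{p'(x)}$, supply an integrable majorant.

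For coercivity, using $d_\lambda\geqslant m_0$ and discarding the nonnegative $\eta$-term,
\[
\langle A_\lambda u,u\rangle\geqslant m_0\int_\Omega|\nabla u|^{p(x)}\,dx+\int_\Omega|u|^{p(x)}\,dx\geqslant C\bigl(\rho(\nabla u)+\rho(u)\bigr),
\]
and combining with \eqref{propriedd.p(x).1} together with $p^->2$ yields $\langle A_\lambda u,u\rangle\geqslant C'\|u\|_V^{p^-}$ once $\|u\|_V$ is sufficiently large, whence $\langle A_\lambda u,u\rangle/\|u\|_V\to\infty$. The main obstacle I anticipate is the careful bookkeeping for $\lambda=0$: one has to justify the integration-by-parts step above under the only mild regularity a priori available for $u\in V_0$, verify that the cancellation of the $\Gamma_0$-surface contribution is exact, and check that the resulting reduced form remains coercive with respect to the full $V_0$-norm even though the gradient integrates only over $\Omega_1$ (which is harmless because $\nabla u\equiv 0$ on each $\Omega_{0,i}$, so $\rho(\nabla u)=\int_{\Omega_1}|\nabla u|^{p(x)}dx$ by definition of $V_0$).
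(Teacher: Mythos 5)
Your argument is correct and is essentially the standard verification that the paper itself omits (its ``proof'' is only the citation ``following the same ideas from \cite{VCK}''): pointwise monotonicity of $\xi\mapsto|\xi|^{p(x)-2}\xi$ for each of the gradient and reaction terms, dominated convergence with a $|\cdot|^{p(x)-1}$-type majorant for hemicontinuity, and the lower bound $m_0\rho(\nabla u)+\rho(u)$ combined with \eqref{propriedd.p(x).1} for coercivity. Your integration-by-parts reduction of $A_0$ on $V_0$, with the exact cancellation of the $\Gamma_0$ surface term against the inward-normal boundary integral and the observation that $\nabla u\equiv 0$ on $\Omega_0$, is precisely the computation the paper carries out later in the proof of Lemma \ref{lema1-2}, so your treatment of $\lambda=0$ is consistent with the paper's own handling of that case.
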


We define the sets
\begin{align*}
  &D(A_\lambda^H):=\{v \in V : A_\lambda v \in H\}, \quad \hbox{for} \quad \lambda \in (0,1], \\
  &D(A_0^{H_0}):=\{v \in V_0 : A_0 v \in H_0\},
\end{align*}
and consider  the operators $A^H_\lambda:D(A^H_\lambda)\subset H \to H$ given by
\begin{align*}
   & A_\lambda^H(u) = A_\lambda u, \quad\forall u \in D(A_\lambda^H), \quad \hbox{for} \quad \lambda \in (0,1] \hbox{ and} \\
   & A_0^{H_0}(u) = A_0 u,  \quad \forall u \in D(A_0^{H_0}).
\end{align*}

\begin{proposition}\label{Proposicao1.5Dissert}
Assume that $H$ is a Hilbert space. Let $V$ the reflexive Banach space, such that $V\hookrightarrow H \hookrightarrow V'$, moreover $V$ is dense in $H$. If $A: V\to V'$ is monotone, hemicontinuous and coercive then $A_H:D(A_H)\subset H \to H$ is maximal monotone operator.
\end{proposition}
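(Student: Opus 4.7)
The plan is to verify the two defining properties of a maximal monotone operator on the Hilbert space $H$: namely, monotonicity of $A_H$, together with the range condition $R(I+A_H)=H$, which by Minty's theorem is equivalent to maximality.

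First, the monotonicity of $A_H$ is inherited directly from that of $A$. Indeed, if $u,v\in D(A_H)\subset V$, then
\begin{equation*}
(A_H u-A_H v,u-v)_H=\langle Au-Av,u-v\rangle_{V',V}\geqslant 0,
\end{equation*}
where we used that the duality pairing $V'\times V$ restricted to elements of $H$ coincides with the inner product $(\cdot,\cdot)_H$ (since $V\hookrightarrow H\hookrightarrow V'$ with $V$ dense in $H$).

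The substantive step is the range condition. To obtain it, consider the perturbed operator $\widetilde{A}:=J+A:V\to V'$, where $J:V\hookrightarrow H\hookrightarrow V'$ is the canonical embedding. I will check that $\widetilde{A}$ inherits monotonicity, hemicontinuity and coercivity from $A$: monotonicity is immediate as $J$ is monotone (the quadratic form $(u,u)_H\geqslant 0$); hemicontinuity follows since $t\mapsto \langle \widetilde{A}(u+tv),w\rangle$ splits as a linear-in-$t$ part coming from $J$ plus the hemicontinuous part coming from $A$; and coercivity holds because
\begin{equation*}
\frac{\langle \widetilde{A}u,u\rangle}{\|u\|_V}=\frac{\|u\|_H^2}{\|u\|_V}+\frac{\langle Au,u\rangle}{\|u\|_V}\longrightarrow\infty\quad\text{as }\|u\|_V\to\infty,
\end{equation*}
since the second term already tends to infinity by hypothesis, and the first is nonnegative. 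By the Browder--Minty surjectivity theorem for monotone, hemicontinuous, coercive operators on reflexive Banach spaces, $\widetilde{A}$ is surjective onto $V'$.

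Given any $f\in H$, view $f$ as an element of $V'$ via the embedding and solve $\widetilde{A}u=f$, obtaining $u\in V$ with $u+Au=f$ in $V'$. Since $f\in H$ and $u\in V\subset H$, the identity $Au=f-u$ forces $Au\in H$, so $u\in D(A_H)$ and $u+A_H u=f$ in $H$. Thus $R(I+A_H)=H$, and Minty's theorem yields that $A_H$ is maximal monotone. The only delicate point is the triple identification $V\hookrightarrow H\hookrightarrow V'$ used to regard $f\in H$ as an element of $V'$ and to identify duality pairings with $H$-inner products on pairs where both entries lie in $H$; this is where the density of $V$ in $H$ enters, ensuring the embedding $H\hookrightarrow V'$ is injective.
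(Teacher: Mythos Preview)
The paper states this proposition without proof, treating it as a standard result from the theory of monotone operators (it is essentially a classical theorem, see e.g.\ Barbu or Br\'ezis \cite{BrezisF}). Your argument is correct and is precisely the standard proof: monotonicity is inherited, and the range condition $R(I+A_H)=H$ follows by applying the Browder--Minty surjectivity theorem to the perturbed operator $J+A:V\to V'$, after which Minty's characterization yields maximal monotonicity. There is nothing to compare against in the paper itself, but your proof is the expected one and would be appropriate to include if a proof were required.
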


It follows from Proposition \ref{Proposicao1.5Dissert} that $A_\lambda^H $ and $A_0^{H_0}$ are maximal monotones operators.  In addition, these operators can also be seen as subdifferential type, meaning that, $A^H_\lambda = \partial \varphi^\lambda$, where $\varphi^\lambda : H \to (-\infty, \infty]$ are lower semicontinuous convex functions, defined by
\begin{equation*}
 \varphi^\lambda(u) = \left\{
  \begin{array}{ll}
   \ds \int_\Omega \frac{d_\lambda(x)}{p(x)} |\nabla u|^{p(x)}\,\mathrm{d}x +\int_\Omega \frac{d_\lambda(x)\eta}{2} |\nabla u|^{2}\,\mathrm{d}x+  \int_\Omega \frac{1}{p(x)} |u|^{p(x)}\,\mathrm{d}x, & \hbox{ if } u \in V \\
   \infty , & \hbox{ otherwise}
  \end{array}
\right.
\end{equation*}
for $\lambda \in (0,1]$.

For $\lambda=0$, $A_0^{H_0}= \partial \varphi$ where $\varphi : H_0 \to (-\infty, \infty]$ is a lower semicontinuous convex function, defined by
\begin{equation*}
 \varphi(u) = \left\{
  \begin{array}{ll}
   \ds  \int_{\Omega_1} \frac{d_0(x)}{p(x)}|\nabla u|^{p(x)}\,\mathrm{d}x +\int_{\Omega_1} \frac{d_0(x)\eta}{2}|\nabla u|^{2}\,\mathrm{d}x +  \int_\Omega \frac{1}{p(x)} |u|^{p(x)}\,\mathrm{d}x, & \hbox{ if } u \in V_0\\
   \infty , & \hbox{ otherwise}.
  \end{array}
\right.
\end{equation*}
The problems \eqref{prob_plambda} and \eqref{prob_pzero} can be written abstractly as
\begin{equation}\label{prob_P_lamba_abstrato}
\begin{cases}
u_t^{\lambda} + A_{\lambda}u^{\lambda}=B(u^{\lambda}) \\
u^{\lambda}(0)=u_0^{\lambda}, \qquad\qquad \hbox{ for all}\quad \lambda \in (0,1],
\end{cases}
\end{equation}
and
\begin{equation}\label{prob_P_0_abstrato}
\begin{cases}
u_t + A_0u=B(u) \\
u(0)=u_0.
\end{cases}
\end{equation}

\begin{definition}
Let $T>0$, we say that $u \in C([0,T];H)$ is a strong solution to \eqref{prob_P_lamba_abstrato}, if
\begin{enumerate}
   \item [(i)] $u$ is absolutely continuous in any compact subinterval  of $(0,T)$;
 \item  [(ii)] $u(t)\in D(A_\lambda^H)$ almost always in $(0,T)$, $u(0)=u_0$;
 \item [(iii)] $\ds\frac{du}{dt}(t) + A_\lambda^H(u(t)) = B(u(t))$, occurs for almost all  $t\in (0,T)$.
  \end{enumerate}
A function $u \in C([0,T];H)$ is a weak solution to \eqref{prob_P_lamba_abstrato}, if there is a  sequence $(u_n)_{n\in\N}$ of strong solutions convergent to  $u$ in $C([0,T];H)$.
\end{definition}

It follows  from  \cite[Proposition 1]{AJT-ExisteSol}  that \eqref{prob_P_lamba_abstrato} has a global weak solution $u^\lambda(\cdot, u_0^{\lambda})$ starting in $u^\lambda(0)=u_0^{\lambda}\in \overline{D(A_\lambda^{H})}^{H}$. If $u_0^{\lambda} \in D(A_\lambda^H)$ then the functions $u^\lambda(\cdot,u_0^{\lambda})$ are strong solution of \eqref{prob_P_lamba_abstrato} Lipschitz continuous. Analogously for \eqref{prob_P_0_abstrato}.

For $\lambda \in (0,1]$, so we can define in $\overline{D(A_\lambda^H)}^H$ a semigroup $\{T_\lambda(t)\}_{t \geqslant 0}$ of nonlinear operators, associated with \eqref{prob_P_lamba_abstrato} by $T_\lambda(t)u_0^{\lambda} = u^\lambda(t,u_0^{\lambda})$, $t\geqslant 0$.

 To simplify, we will denote the solution $u^0(t,u_0)$ of \eqref{prob_P_0_abstrato}  just by $u(t,u_0)$. Thus, if $\lambda=0$, we can define in $\overline{D(A_0^{H_0})}^{H_0}$ a semigroup $\{T(t)\}_{t \geqslant 0}$ of nonlinear operators, associated with \eqref{prob_P_0_abstrato} by $T(t)u_0 = u(t,u_0)$, $t\geqslant 0$.

\section{Estimates involving the solution}
The estimates of the solution represents a important step for assure the existence of an absorbing ball in $H$ for the dynamical system $(\{T_\lambda(t)\}_{t\geqslant 0}, H)$ this is one of the purpose of this section.

\begin{lemma}\label {estimativapot}
Let $\lambda, \mu$ be arbitrary nonnegative numbers. Then for all $\alpha,\beta, \alpha\geqslant \beta \geqslant 0$
\begin{equation}
\lambda ^{\alpha}+ \mu ^{\beta}\geqslant \frac{1}{2^{\alpha}}
\begin{cases}
(\lambda + \mu)^{\alpha},\,\, if \lambda + \mu <1,\\
(\lambda + \mu)^{\beta},\,\, if \lambda + \mu \geqslant 1,
\end{cases}
\end{equation}

\begin{equation}
\lambda ^{\alpha}+ \mu ^{\beta}\leqslant 2
\begin{cases}
(\lambda + \mu)^{\alpha},\,\, if \lambda + \mu \geqslant 1,\\
(\lambda + \mu)^{\beta},\,\, if \lambda + \mu <1,
\end{cases}
\end{equation}
\end{lemma}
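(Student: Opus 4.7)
The statement is an elementary two-variable inequality independent of the PDE machinery, and my plan is a case split on whether $\lambda+\mu<1$ or $\lambda+\mu\geq 1$, combined with the two comparisons
$$t^{\alpha}\leq t^{\beta}\ \text{for}\ 0\leq t\leq 1,\qquad t^{\alpha}\geq t^{\beta}\ \text{for}\ t\geq 1,$$
both valid because $\alpha\geq\beta\geq 0$. The other ingredient is the pair of classical power-sum inequalities: $a^{\alpha}+b^{\alpha}\geq 2^{1-\alpha}(a+b)^{\alpha}$ when $\alpha\geq 1$ (convexity of $t\mapsto t^{\alpha}$), and $a^{\alpha}+b^{\alpha}\geq (a+b)^{\alpha}$ when $0\leq\alpha\leq 1$ (subadditivity of $t\mapsto t^{\alpha}$). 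In both sub-regimes one has $a^{\alpha}+b^{\alpha}\geq 2^{-\alpha}(a+b)^{\alpha}$, which is the constant that will appear in the final bound.

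\textbf{Upper bound (the easy direction).} If $\lambda+\mu\geq 1$, I would use monotonicity to write $\lambda^{\alpha}\leq(\lambda+\mu)^{\alpha}$ and $\mu^{\beta}\leq(\lambda+\mu)^{\beta}\leq(\lambda+\mu)^{\alpha}$, the last step combining $\lambda+\mu\geq 1$ with $\alpha\geq\beta$, and summing to obtain $\lambda^{\alpha}+\mu^{\beta}\leq 2(\lambda+\mu)^{\alpha}$. If instead $\lambda+\mu<1$, then $\lambda,\mu\in[0,1)$, so $\lambda^{\alpha}\leq\lambda^{\beta}\leq(\lambda+\mu)^{\beta}$ and $\mu^{\beta}\leq(\lambda+\mu)^{\beta}$, which add to give $\lambda^{\alpha}+\mu^{\beta}\leq 2(\lambda+\mu)^{\beta}$.

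\textbf{Lower bound.} If $\lambda+\mu<1$, then $\mu\in[0,1)$ and $\alpha\geq\beta$ force $\mu^{\beta}\geq\mu^{\alpha}$, so
$$\lambda^{\alpha}+\mu^{\beta}\geq\lambda^{\alpha}+\mu^{\alpha}\geq 2^{-\alpha}(\lambda+\mu)^{\alpha},$$
the second step applying whichever of the two power-sum inequalities matches the position of $\alpha$ relative to $1$ (both yield a constant no smaller than $2^{-\alpha}$). If $\lambda+\mu\geq 1$, by symmetry I may assume $\lambda\geq\mu$, so $\lambda\geq(\lambda+\mu)/2$ and
$$\lambda^{\alpha}+\mu^{\beta}\geq\lambda^{\alpha}\geq 2^{-\alpha}(\lambda+\mu)^{\alpha}\geq 2^{-\alpha}(\lambda+\mu)^{\beta},$$
the final inequality using $\lambda+\mu\geq 1$ and $\alpha\geq\beta$. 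The reverse sub-case $\mu\geq\lambda$ is handled analogously, via $\mu^{\beta}\geq 2^{-\beta}(\lambda+\mu)^{\beta}\geq 2^{-\alpha}(\lambda+\mu)^{\beta}$.

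\textbf{Main obstacle.} There is essentially no obstacle here; the only bookkeeping point is to be careful about the direction of $t^{\alpha}\lessgtr t^{\beta}$ depending on whether $t$ lies below or above $1$, and to line that up with the side (upper or lower) of the inequality one is after. Once the case split on $\lambda+\mu$ versus $1$ is in place, the rest is one line of algebra per case.
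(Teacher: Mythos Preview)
Your proof is correct. One minor quibble: the phrase ``by symmetry I may assume $\lambda\geq\mu$'' is slightly misleading, since the expression $\lambda^{\alpha}+\mu^{\beta}$ is not symmetric in $\lambda$ and $\mu$; but you immediately treat the sub-case $\mu\geq\lambda$ explicitly anyway, so the argument is complete.

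As for comparison with the paper: the paper does not give its own proof of this lemma at all --- it simply cites Proposition~3.1 in \cite{jacson_claudianor}. Your self-contained elementary argument (case split on $\lambda+\mu\lessgtr 1$, monotonicity of $t\mapsto t^{\gamma}$ on $[0,1]$ versus $[1,\infty)$, and the power-sum bound $a^{\alpha}+b^{\alpha}\geq 2^{-\alpha}(a+b)^{\alpha}$) is exactly the kind of direct verification one would expect for such an inequality, and makes the paper more self-contained than the original treatment.
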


\begin{proof}
See the Proposition 3.1 in \cite{jacson_claudianor}.
\end{proof}

\begin{lemma}\label{lema.limitacao_u_em_V}

   Let $u$ be a solution of \eqref{prob_P_0_abstrato} . Then

   \begin{enumerate}
   \item There is a positive constant $r_0$  such that
  $\|u(t)\|_H\leqslant r_0, \,\,\, \text{for all}\,\, t \geqslant 1.$
  \item There is a positive constant $r$  such that
  $\|u(t)\|_V\leqslant r, \,\,\, \text{for all}\,\, t \geqslant 2.$
  \end{enumerate}
We obtain the same estimates if $u^{\lambda}$  is a solution of \eqref{prob_P_lamba_abstrato}, uniformly in $(0,1]$.

\end{lemma}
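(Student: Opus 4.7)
\emph{Plan.} I would follow the standard two-step dissipation argument: first an $L^{2}$-absorbing estimate obtained by testing the abstract equation against $u$ and exploiting the super-quadratic growth coming from $p(x)\geq p^{-}>2$, then a bootstrap to a $V$-estimate by testing against $u_{t}$ and applying a uniform-Gronwall argument to $t\mapsto\varphi(u(t))$.

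\medskip

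\emph{Step 1 ($H$-bound).} Pairing $u_{t}+A_{0}u=B(u)$ with $u$ in the $V_{0}'\times V_{0}$ duality, and noting that the boundary integrals in the defining formula for $A_{0}$ cancel upon integration by parts, one obtains
\begin{equation*}
\tfrac{1}{2}\tfrac{d}{dt}\|u\|_{H}^{2} + \int_{\Omega_{1}} d_{0}(|\nabla u|^{p(x)-2}+\eta)|\nabla u|^{2}\,dx + \rho(u) = (B(u),u)_{H},
\end{equation*}
where $\rho(u)=\int_{\Omega}|u|^{p(x)}\,dx$. Discarding the nonnegative gradient contribution and using \eqref{propriedd.p(x).1} together with the continuous embedding $L^{p(x)}(\Omega)\hookrightarrow L^{2}(\Omega)$ (valid since $p(x)\geq p^{-}>2$ and $\Omega$ is bounded) gives $\rho(u)\geq c_{1}\|u\|_{H}^{p^{-}}$ whenever $\|u\|_{H}\geq 1$. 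The Lipschitz bound on $B$ yields $|(B(u),u)_{H}|\leq C_{0}(1+\|u\|_{H}^{2})$, and a Young absorption (possible since $p^{-}/2>1$) produces
\begin{equation*}
\tfrac{d}{dt}\|u\|_{H}^{2} + c_{1}'\|u\|_{H}^{p^{-}} \leq K_{1}.
\end{equation*}
Direct ODE comparison with this super-quadratic dissipation shows that $\|u(t)\|_{H}\leq r_{0}$ for every $t\geq 1$, with $r_{0}$ depending only on $c_{1}',K_{1},p^{-}$ and in particular independent of $u_{0}$.

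\medskip

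\emph{Step 2 ($V$-bound).} Integrating the energy identity of Step 1 over $[t,t+1]$ with $t\geq 1$ and invoking the $H$-bound yields $\int_{t}^{t+1}\langle A_{0}u(s),u(s)\rangle\,ds\leq C_{2}$. A term-by-term comparison (using $p^{-}\geq 2$ so that every coefficient appearing in $\varphi$ is at most one half of the corresponding coefficient in $\langle A_{0}u,u\rangle$) gives $\varphi(u)\leq \tfrac{1}{2}\langle A_{0}u,u\rangle$, hence $\int_{t}^{t+1}\varphi(u(s))\,ds\leq C_{3}$. Since $A_{0}=\partial\varphi$ and the solution is strong on $(0,\infty)$, testing the equation against $u_{t}$ and applying the chain rule for subdifferentials gives
\begin{equation*}
\tfrac{1}{2}\|u_{t}(t)\|_{H}^{2} + \tfrac{d}{dt}\varphi(u(t)) \leq \tfrac{1}{2}\|B(u(t))\|_{H}^{2} \leq C_{4},\qquad t\geq 1.
\end{equation*}
Consequently $\varphi(u(t+1))\leq \varphi(u(s))+C_{4}$ for every $s\in[t,t+1]$; averaging over $s$ yields $\varphi(u(t+1))\leq C_{3}+C_{4}$. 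Applying \eqref{propriedd.p(x).1} to the two modular pieces of $\varphi$ converts this bound into $\|u(t)\|_{V}\leq r$ for every $t\geq 2$.

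\medskip

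The analogous statements for \eqref{prob_P_lamba_abstrato} follow by the same computation, because every coercive step relies only on the uniform lower bound $d_{\lambda}\geq m_{0}>0$; the non-uniform upper bound $M_{\lambda}$ never enters. The point I expect to require the most care is the variable-exponent bookkeeping: inequality \eqref{propriedd.p(x).1} controls $\|v\|_{p(x)}$ by $\rho(v)$ only after splitting into the regimes $\rho\leq 1$ and $\rho>1$ with different exponents $p^{\pm}$, so every coercivity and absorption estimate must be carried out so that the resulting radii $r_{0},r$ and the absorbing times $1,2$ come out genuinely independent of both $u_{0}$ and $\lambda\in(0,1]$.
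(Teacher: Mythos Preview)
Your proposal is correct and follows essentially the same two-step strategy as the paper: an energy identity with $u$ to get super-quadratic dissipation in $H$, followed by the subdifferential/chain-rule computation for $\varphi(u)$ combined with a uniform-Gronwall (equivalently, your averaging) argument to bound $\varphi(u(t))$ and hence $\|u(t)\|_{V}$. The only cosmetic differences are that the paper keeps the full $V_{0}$-coercivity $\langle A_{0}u,u\rangle\gtrsim\|u\|_{V_{0}}^{p^{-}}$ (via Lemma~\ref{estimativapot}) rather than discarding the gradient term as you do, and it obtains $\varphi(u)\leq\langle A_{0}u,u\rangle$ from the subdifferential inequality at $0$ rather than by your direct coefficient comparison $\varphi(u)\leq\tfrac{1}{2}\langle A_{0}u,u\rangle$; both routes are equivalent here.
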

\begin{proof} Let $u$ be a solution of \eqref{prob_P_0_abstrato} with $\|u\|_{V_0}\geqslant 1$. Taking the scalar product with $u(t)$ in \eqref{prob_P_0_abstrato}, we get

\begin{align*}
\frac{1}{2}\frac{d}{dt}\|u(t)\|^2_{H} + \langle A_0u(t), u(t)\rangle_{V_0',V_0}&=\biggl(\frac{du}{dt}(t), u(t)\biggr)_{L^2} + (A_0u(t), u(t))_{L^2}\\&=(B(u(t)),u(t))_{L^2}.
\end{align*}
So,
\begin{align*}
\frac{1}{2}\frac{d}{dt}\|u(t)\|^2_{H}+\frac{C}{2^{p^+}}\|u(t)\|_{V_0}^{p^-}&\leqslant \frac{1}{2}\frac{d}{dt}\|u(t)\|^2_{H} + \langle A_0u(t), u(t)\rangle_{V_0',V_0}\\
&\leqslant L \|u(t)\|_{H}^2+\|B(0)\|_{H}\|u(t)\|_{H},
\end{align*}
as $V_0\hookrightarrow\hookrightarrow H$ we have $\|u(t)\|_{H}\leqslant \eta\|u(t)\|_{V_0}$, where $\eta=|\Omega|+1$, so that
\begin{align*}
\frac{1}{2}\frac{d}{dt}\|u(t)\|^2_{H}+\frac{C}{2^{p^+}}\|u(t)\|_{V_0}^{p^-}&\leqslant c_1\|u(t)\|^2_{V_0} + c_2 \|u(t)\|_{V_0}
\end{align*}
where $c_1=L\eta^2$ and $c_2=\eta\|B(0)\|_{H}.$

Consider  $\theta=\frac{1}{2}p^-$ and $\varepsilon>0$, chosen so that $\frac{c}{2^{p^+}}-\frac{1}{\theta}\varepsilon^{\theta}-\frac{1}{p^-}\varepsilon^{p^-}>0$. It follows from Young's inequality that
\begin{align*}
c_1\|u(t)\|^2_{V_0} + c_2 \|u(t)\|_{V_0} \leqslant \frac{1}{\theta}\varepsilon^{\theta}\|u(t)\|_{V_0} ^{p^-} + \frac{1}{\theta'}\biggl(\frac{c_1}{\varepsilon}\biggr)^{\theta'} + \frac{1}{p^-}\varepsilon^{p^-}\|u(t)\|_{V_0}^{p^-}+\frac{1}{q}\biggl(\frac{c_2}{\varepsilon}\biggr)^{q},
\end{align*}
 where $q=(p^-)'.$ Let $\gamma=\frac{c}{2^{p^+}}-\frac{1}{\theta}\varepsilon^{\theta}-\frac{1}{p^-}\varepsilon^{p^-} >0$ then
\begin{align*}
\frac{d}{dt}\|u(t)\|^2_{H}+\frac{2\gamma}{\eta^{p^-}}\|u(t)\|^{p^-}_{H}&\leqslant\frac{d}{dt}\|u(t)\|^2_{H}+2\gamma\|u(t)\|_{V_0}^{p^-}\\
&\leqslant \frac{2}{\theta'}\biggl(\frac{c_1}{\varepsilon}\biggr)^{\theta'} +\frac{2}{q}\biggl(\frac{c_2}{\varepsilon}\biggr)^{q}, \forall t\geqslant 0,
\end{align*}
taking $\delta=\frac{2}{\theta'}\biggl(\frac{c_1}{\varepsilon}\biggr)^{\theta'} +\frac{2}{q}\biggl(\frac{c_2}{\varepsilon}\biggr)^{q},$ $\tilde{\gamma}=\frac{2\gamma}{\eta^{p^-}}$ and $y(t)=\|u(t)\|^2_{H}$ we have
$$\frac{d}{dt}y(t)+\tilde{\gamma}y(t)^{\frac{p^-}{2}}\leqslant \delta,\,\,\,\forall t\geqslant 0.$$
Now, applying  \cite[Lemma 5.1]{temam},  we get 
\begin{align*}
\|u(t)\|_{H}\leqslant \biggl(\frac{\delta}{\tilde{\gamma}}\biggr)^{\frac{1}{p^-}} + \biggl(\tilde{\gamma}\biggl(\frac{p^--2}{2}\biggr)t\biggr)^{-\frac{1}{p^--2}}, \,\,\,\forall t\geqslant 0.
\end{align*}
Therefore,  $$\|u(t)\|_{H}\leqslant k_1, \,\,\,t\geqslant 1$$ where $k_1=\biggl(\frac{\delta}{\tilde{\gamma}}\biggr)^{\frac{1}{p^-}} + \biggl(\tilde{\gamma}\biggl(\frac{p^--2}{2}\biggr)\biggr)^{-\frac{1}{p^--2}}.$ For $t,$ such that $\|u(t)\|_{V_0}\leqslant 1$, we have $$\|u(t)\|_{H}\leqslant\eta\|u(t)\|_{V_0}\leqslant \eta.$$ Consequently,
$$\|u(t)\|_{H}\leqslant r_0, \,\,\,\forall t\geqslant 1,$$
where $r_0=\max\{k_1,\eta\}.$

On the other hand, 

 \begin{align*}
 \frac{d}{dt}\varphi(u)=\langle\partial\varphi(u), u_t\rangle_H=\langle A_0 u, u_t\rangle=\langle B(u)-u_t, u_t\rangle=-\|B(u)-u_t\|_H^2+\langle B(u)-u_t, B(u)\rangle.
 \end{align*}

 With this, we ensure that

 \begin{align*}\frac{d}{dt}\varphi(u)\leqslant \frac{1}{2}\|B(u)\|_H^2\leqslant \frac{1}{2}k_2^2, \,\,\,\forall t\geqslant 1,
 \end{align*}
where $k_2:=Lr_0+\|B(0)\|_H.$

 It follows from the definition of subdifferential that

 \begin{align}\label{estimativa varphi}
 \frac{1}{2}\frac{d}{dt}\|u\|_H^2 + \varphi (u)&=\langle u_t, u \rangle _ H + \varphi (u)\leqslant \langle u_t, u \rangle _ H  + \langle \partial\varphi(u), u\rangle _H\\ \nonumber
 & \leqslant \|B(u)\|_H \|u\|_H\leqslant k_2r_0, \,\, \forall t\geqslant 1.
 \end{align}

 Integrating the estimate \eqref{estimativa varphi} in $[t, t+1],$ $t\geqslant 1$, we get

 \begin{align*}
 \int_t^{t+1} \varphi (u)\, ds&\leqslant \frac{1}{2}\|u(t+1)\|_H^2 + \int_t^{t+1} \varphi (u)\, ds\\
 &\leqslant \frac{1}{2} \|u(t)\|_H^2 +k_2r_0\leqslant\frac{1}{2} r_0^2 + k_2r_0:=k_3.
 \end{align*}
 Using the Uniform  Gronwall Lemma, \cite[Lemma 1.1]{temam},  for $y=\varphi (u)$, $g=0$ and $h=\frac{1}{2} k_2^2$ we conclude that
 \begin{align*}
 \varphi (u(t+1))\leqslant k_3+\frac{1}{2}k_2^2:=k_4,\,\,\forall t\geqslant 1.
 \end{align*}
 Thus, we have

 \begin{align}\label{estimativarho1}
 \frac{1}{p^+}\min\{m_0,1\} (\rho(u)+ \rho (\nabla u))\leqslant 2^{p^+} \varphi (u(t))\leqslant 2^{p^+}k_4, \,\,\forall t\geqslant 2.
 \end{align}

If $\|u\|_{V}\leqslant 1$, there is nothing to prove. If $\|u\|_V\geqslant 1$ we have two cases :
  \begin{enumerate}
  \item $\|u\|_{p(x)}\leqslant 1$ and  $\|\nabla u\|_{p(x)}\geqslant 1$, using \eqref{propriedd.p(x).1} we obtain
 $$ \|u\|_{p(x)}^{p^{+}}\leqslant \rho (u)\leqslant \|u\|_{p(x)}^{p^{-}}\quad \text{and}\quad \|\nabla u\|_{p(x)}^{p^{-}}\leqslant \rho (\nabla u)\leqslant \|\nabla u\|_{p(x)}^{p^{+}}.$$
Follows from Lemma \ref{estimativapot} that

\begin{align*}
\|u\|_{p(x)}^{p^{+}} + \|\nabla u\|_{p(x)}^{p^{-}}\geqslant \frac{1}{2^{p^+}} (\|u\|_{p(x)}+ \|\nabla u\|_{p(x)})^{p^{-}}
\end{align*}
 consequently,
\begin{align}\label{estimativarho}
\frac{1}{2^{p^+}}\|u\|_{V}^{p^{-}}\leqslant \rho (u) + \rho (\nabla u).
\end{align}

 Thanks to \eqref{estimativarho1} and \eqref{estimativarho} we conclude that
$$\|u\|_{V}\leqslant 2^{\frac{p^{+}}{p^{-}}}
\biggl(\frac{p^{+}k_4}{\min\{m_0, 1\}}\biggr)^{\frac{1}{p^{-}}}.$$

   \item $\|u\|_{p(x)}\geqslant 1$ and  $\|\nabla u\|_{p(x)}\leqslant 1$. Follows similarly to item $(1)$.
\end{enumerate}
 Therefore $$\|u\|_{V}\leqslant  \max\biggl\{
\biggl(\frac{2^{p^{+}}p^{+}k_4}{\min\{m_0, 1\}}\biggr)^{\frac{1}{p^{+}}}, \biggl(\frac{2^{p^{+}}p^{+}k_4}{\min\{m_0, 1\}}\biggr)^{\frac{1}{p^{-}}}, 1\biggr \},$$ forall $ t\geqslant 2.$

\end{proof}

The nexts results assures about  the estimate of $|\nabla u_\lambda(t,x)|$ and $|u_\lambda(t,x)|$, for almost all $(t,x)\in [0,T]\times\Omega$ and for all $\lambda \in [0,1]$, this will help  prove a strong continuity of a shift operator defined in the next section.

\begin{lemma}\label{lema.Estimativa_Uniforme.Ellard}
  Let $p(\cdot)$ a function in $L^{\infty}(\Omega)$, with $1< p^{-}\leqslant p(x)$ and $f:[0,T]\times \Omega \to \R$, $f(t,\cdot) \in L^{p(x)}(\Omega)$. If exists $C>0$ an independent constant of $p(\cdot)$, such that
  $$ \int_{0}^{T}\int_{\Omega}|f(t,x)|^{p(x)}\,dx \,dt\leqslant C,
  $$
  then $|f(t,x)|\leqslant 1$,
almost all $(t,x)\in[0,T]\times\Omega$.
\end{lemma}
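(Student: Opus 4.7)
The plan is to argue by contradiction. Suppose the measurable set
$E:=\{(t,x)\in[0,T]\times\Omega : |f(t,x)|>1\}$ has positive Lebesgue measure. By continuity of measure applied to the nested exhaustion $E_{1/n}:=\{(t,x): |f(t,x)|\geqslant 1+1/n\}$, there exists $\delta>0$ such that $E_{\delta}:=\{(t,x):|f(t,x)|\geqslant 1+\delta\}$ has positive measure. A contradiction will then be extracted from the blow-up of $(1+\delta)^{kp(x)}$ along a sequence of admissible exponents.

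The crucial observation is that the hypothesis provides a constant $C$ which is \emph{independent of the exponent function} $p(\cdot)$. I would read this as saying that the bound
$$\int_{0}^{T}\!\!\int_{\Omega}|f(t,x)|^{q(x)}\,dx\,dt\leqslant C$$
is available for every admissible $q(\cdot)\in L^{\infty}(\Omega)$ with $1<q^{-}\leqslant q(x)$. I would then test the estimate against the rescaled exponents $q_{k}(x):=k\,p(x)$ for $k\in\mathbb{N}$. Since $1<kp^{-}\leqslant k p(x)$, these exponents are admissible; and on $E_{\delta}$ one has $|f(t,x)|^{kp(x)}\geqslant (1+\delta)^{kp^{-}}$.

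Combining these ingredients gives
\begin{align*}
C\geqslant \int_{0}^{T}\!\!\int_{\Omega}|f(t,x)|^{kp(x)}\,dx\,dt
\geqslant \iint_{E_{\delta}}(1+\delta)^{kp(x)}\,dx\,dt
\geqslant (1+\delta)^{kp^{-}}\,\mu(E_{\delta}),
\end{align*}
and the right-hand side tends to $+\infty$ as $k\to\infty$, contradicting the finiteness of $C$. Therefore $\mu(E)=0$ and $|f(t,x)|\leqslant 1$ a.e.\ on $[0,T]\times\Omega$.

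The main obstacle is not analytical but interpretive: one must justify the reading of the clause ``$C$ independent of $p(\cdot)$'' as allowing the exponent to be replaced by any admissible rescaling (rather than as a statement about a single fixed $p(\cdot)$, in which case the conclusion would simply be false, e.g.\ by taking $f\equiv 2$). Once this is granted, no further machinery beyond the pointwise estimate $(1+\delta)^{kp^{-}}\to\infty$ (equivalently, a single application of Fatou's lemma after dividing by $\mu(E_{\delta})$) is needed.
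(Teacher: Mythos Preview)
Your argument is correct and rests on the same interpretation of the hypothesis as the paper's proof: the bound is assumed to hold uniformly over a family of admissible exponents, so one may send $p^{-}\to\infty$. Both proofs then fix a level set $E_{\delta}=\{|f|\geqslant 1+\delta\}$ (the paper calls it $A_{\beta}$) and derive a contradiction from its having positive measure.

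The route differs. You bound the integral from below in one line,
\[
C\;\geqslant\;\iint_{E_{\delta}}|f|^{kp(x)}\,dx\,dt\;\geqslant\;(1+\delta)^{kp^{-}}\mu(E_{\delta})\xrightarrow[k\to\infty]{}\infty,
\]
using nothing beyond monotonicity of the exponential. The paper instead bounds $\iint_{A_{\beta}}|f|$ from above via the variable-exponent H\"older inequality $\|1\|_{L^{q(x)}}\|f\|_{L^{p(x)}}$ together with the norm--modular relation \eqref{propriedd.p(x).1}, and then passes to the limit $p^{-}\to\infty$ to obtain $|A_{\beta}|(1+\beta)\leqslant |A_{\beta}|$, forcing $|A_{\beta}|=0$. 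Your approach is more elementary and avoids any variable-exponent machinery; the paper's approach, while heavier, showcases the $L^{p(x)}$ toolkit that is used elsewhere in the article. Both are equally valid, and your closing remark about the interpretive issue (that the conclusion fails for a single fixed exponent, e.g.\ $f\equiv 2$) is well taken and applies to the paper's argument as well.
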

\begin{proof}

Fixed $\beta>0,$ we consider the set

  $$ A_\beta=\{(t,x)\in [0,T]\times \Omega;|f(t,x)|\geqslant1+\beta\} \subset [0,T]\times \Omega. $$
Using Hölder's inequality we have

  \begin{align*}
    |A_\beta|(1+\beta) =& \iint_{A_\beta}(1+\beta)\,dx\,dt= \liminf_{p^{-}\to \infty}\iint_{A_\beta}(1+\beta)\,dx\,dt \\
    \leqslant & \liminf_{p^{-}\to \infty}\iint_{A_\beta}|f(t,x)|\,dx\,dt \leqslant \liminf_{p^{-}\to \infty}\left( \frac{1}{p^{-}}+ \frac{1}{q^{-}}\right)\|1\|_{L^{q(x)}(A_\beta)}\|f\|_{L^{p(x)}(A_\beta)},
  \end{align*}
  where $q(x)= \frac{p(x)}{p(x)-1}$. By property \eqref{propriedd.p(x).1} follows that
  \begin{equation*}
    |A_\beta|(1+\beta) \leqslant \liminf_{p^{-}\to \infty}\left( \frac{1}{p^{-}}+1- \frac{1}{p^{+}}\right)\max \{\rho_{q}(1)^{\frac{1}{q^{-}}}, \rho_{q}(1)^{\frac{1}{q^{+}}}\} \max\{\rho_{p}(f)^{\frac{1}{p^{-}}}, \rho_{p}(f)^{\frac{1}{p^{+}}}\}.
  \end{equation*}
  In its turn $\rho_{q}(1)= \iint_{A_\beta} 1^{q(x)} \,dx\,dt=|A_\beta|$ and
  \begin{equation*}
     \rho_{p}(f)= \iint_{A_\beta} |f(t,x)|^{p(x)} \,dx\,dt\leqslant \int_{0}^{T}\int_{\Omega} |f(t,x)|^{p(x)} \,dx\,dt\leqslant C,
  \end{equation*}
  which implies that
  \begin{align*}
    |A_\beta|(1+\beta) \leqslant & \liminf_{p^{-}\to \infty}\left( \frac{1}{p^{-}}+1- \frac{1}{p^{+}}\right)\max \{|A_\beta|^{\frac{1}{q^{-}}}, |A_\beta|^{\frac{1}{q^{+}}}\} \max\{C^{\frac{1}{p^{-}}}, C^{\frac{1}{p^{+}}}\} \\
    = & \max \{\liminf_{p^{-}\to \infty}|A_\beta|^{1-\frac{1}{p^{+}}}, \liminf_{p^{-}\to \infty} |A_\beta|^{1-\frac{1}{p^{-}}}\} \max\{\liminf_{p^{-}\to \infty}C^{\frac{1}{p^{-}}}, \liminf_{p^{-}\to \infty}C^{\frac{1}{p^{+}}}\} = |A_\beta|,
  \end{align*}
  so, $|A_\beta|(1+\beta) \leqslant |A_\beta|$ and with that $|A_\beta|=0$, for all $\beta >0$, implying that $|f(t,x)|\leqslant 1,$ almost all  $(t,x)\in [0,T]\times \Omega$.
\end{proof}

\begin{lemma}\label{lema1-2} 
  Let $u$ solution of \eqref{prob_P_0_abstrato}  and $T>0$, then $|\nabla u(t,x)|\leqslant 1$ and $|u(t,x)|\leqslant 1,$ almost all $(t,x)\in [0,T]\times\Omega$.
\end{lemma}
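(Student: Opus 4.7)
The strategy is to reduce the claim to Lemma \ref{lema.Estimativa_Uniforme.Ellard}, which converts an $L^{p(x)}$ space-time bound with a constant \emph{independent} of $p(\cdot)$ into the pointwise bound by $1$. Hence it suffices to produce a constant $C>0$, depending on $T$, $\|u_0\|_H$, $m_0$, the Lipschitz constant $L$ of $B$ and $\|B(0)\|_H$ but not on the exponent, such that
\begin{equation*}
\int_0^T\int_\Omega |\nabla u(t,x)|^{p(x)}\,\mathrm{d}x\,\mathrm{d}t \leqslant C \quad\text{and}\quad \int_0^T\int_\Omega |u(t,x)|^{p(x)}\,\mathrm{d}x\,\mathrm{d}t \leqslant C.
\end{equation*}

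To produce these bounds I would test \eqref{prob_P_0_abstrato} against $u(t)$ in $H_0$. Using the definition of $A_0$ from Section~2, integration by parts over $\Omega_1$ cancels the $\Gamma_0$ boundary contribution from $-\mathrm{div}$ against the explicit $\Gamma_0$ term in the definition, so
\begin{equation*}
\langle A_0 u,u\rangle_{V_0',V_0} = \int_{\Omega_1}d_0(x)\bigl(|\nabla u|^{p(x)-2}+\eta\bigr)|\nabla u|^2\,\mathrm{d}x + \int_\Omega |u|^{p(x)}\,\mathrm{d}x,
\end{equation*}
and the Lipschitz estimate $(B(u),u)_H\leqslant L\|u\|_H^2+\|B(0)\|_H\|u\|_H$ together with $d_0\geqslant m_0$ yields the energy inequality
\begin{equation*}
\frac{1}{2}\frac{d}{dt}\|u\|_H^2 + m_0\int_{\Omega_1}|\nabla u|^{p(x)}\,\mathrm{d}x + \int_\Omega|u|^{p(x)}\,\mathrm{d}x \leqslant L\|u\|_H^2 + \|B(0)\|_H\|u\|_H.
\end{equation*}
Discarding the two nonnegative space integrals gives $y'\leqslant(2L+1)y+\|B(0)\|_H^2$ for $y(t):=\|u(t)\|_H^2$, whence Gronwall yields a bound $\|u(t)\|_H\leqslant M(T,\|u_0\|_H,L,\|B(0)\|_H)$ on $[0,T]$ that is \emph{visibly independent of $p(\cdot)$}. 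Integrating the displayed energy inequality on $[0,T]$ and inserting this $L^\infty_tL^2_x$ bound then produces
\begin{equation*}
m_0\int_0^T\int_{\Omega_1}|\nabla u|^{p(x)}\,\mathrm{d}x\,\mathrm{d}t + \int_0^T\int_\Omega|u|^{p(x)}\,\mathrm{d}x\,\mathrm{d}t \leqslant \frac{1}{2}\|u_0\|_H^2 + (LM^2+\|B(0)\|_H M)T,
\end{equation*}
whose right-hand side is independent of $p(\cdot)$. Since $u$ is spatially constant on $\Omega_0$, $\nabla u\equiv 0$ there, so the gradient integral over $\Omega_1$ coincides with the one over $\Omega$.

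Applying Lemma \ref{lema.Estimativa_Uniforme.Ellard} to $f=|\nabla u|$ and then to $f=|u|$ delivers $|\nabla u(t,x)|\leqslant 1$ and $|u(t,x)|\leqslant 1$ for almost every $(t,x)\in[0,T]\times\Omega$, which is the claim. The main obstacle, and the only step demanding vigilance, is to keep \emph{every} constant independent of $p(\cdot)$, as this is exactly the hypothesis of Lemma \ref{lema.Estimativa_Uniforme.Ellard}. In particular one must not invoke Lemma \ref{lema.limitacao_u_em_V} (whose constants $k_1,k_4$ involve $p^\pm$) nor the embedding $V_0\hookrightarrow H_0$ (whose embedding constant is $p$-dependent); the direct $L^2$-Gronwall estimate and the pointwise lower bound $d_0\geqslant m_0$ are both $p$-free, which is what makes the scheme close. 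For merely weak solutions, one argues first with strong solutions corresponding to approximating initial data $u_0^n\in D(A_0^{H_0})$ and then passes the a.e.\ bounds to the limit via lower semicontinuity.
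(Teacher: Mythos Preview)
Your proposal is correct and follows essentially the same route as the paper: test \eqref{prob_P_0_abstrato} against $u$, compute $\langle A_0u,u\rangle$ via integration by parts with the $\Gamma_0$ cancellation, use $d_0\geqslant m_0$ and the Lipschitz bound on $B$ to obtain an energy inequality, control $\|u(t)\|_{H_0}$ by Gronwall, integrate in time to get $p(\cdot)$-independent bounds on $\int_0^T\rho_p(u)$ and $\int_0^T\rho_p(\nabla u)$, and conclude via Lemma \ref{lema.Estimativa_Uniforme.Ellard}. Your version is in fact slightly more careful than the paper's in three places: you keep the $\|B(0)\|_H$ term rather than tacitly absorbing it, you make explicit that $\nabla u\equiv 0$ on $\Omega_0$ so that the $\Omega_1$-integral for the gradient is really an $\Omega$-integral (which is what Lemma \ref{lema.Estimativa_Uniforme.Ellard} requires), and you flag the approximation step for weak solutions.
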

\begin{proof}
  Since $u$ is a solution of \eqref{prob_P_0_abstrato}, then $ u_t + A_0^{H_0}u=Bu$.
Making $(\cdot ,u)_{H_0}$ follow that
  \begin{equation}\label{lema1-2.eq1}
    (u_t,u)_{H_0} + (A_0^{H_0}u,u)_{H_0}=(Bu,u)_{H_0}.
  \end{equation}
  In its turn
  \begin{align*}
    (A_0^{H_0}u,u)_{H_0} =& \langle A_0^{H_0}u,u\rangle_{V_0^{'},V_0}= -\int_{\Gamma_0\cup \Gamma}d_0(x)(|\nabla u|^{p(x)-2}+\eta)\frac{\partial u}{\partial\vec{n}}u\,\,dx+ \underset{\Omega}\int |u|^{p(x)-2} |u|^2\,\,dx\\&+\underset{\Omega_1}\int d_0(x)(|\nabla u|^{p(x)-2}+\eta)\nabla u\nabla u\,\,dx \,\, dx+ \int_{\Gamma_0}d_0(x)(|\nabla u|^{p(x)-2}+\eta)\frac{\partial u}{\partial\vec{n}}u\,\,dx \\
    = & \int_{\Omega_1} d_0(x)|\nabla u|^{p(x)}\,dx+ \int_{\Omega_1} d_0(x)\eta|\nabla u|^2\,dx  +\int_{\Omega}|u|^{p(x)}\,dx,
  \end{align*}
  coming back to \eqref{lema1-2.eq1} we get
  \begin{align*}
    \frac{1}{2}\frac{d}{dt}\|u\|_{H_0}^2 + \int_{\Omega_1} d_0(x)|\nabla u|^{p(x)}\,dx+ \int_{\Omega_1} d_0(x)&\eta|\nabla u|^2\,dx  +\int_{\Omega}|u|^{p(x)}\,dx = (Bu,u)_{H_0}\\
    &\leqslant |(Bu,u)_{H_0}|\leqslant   \|Bu\|_{H_0}\|u\|_{H_0}\leqslant L_B\|u\|_{H_0}^2.
  \end{align*}
  Since
   $$ \int_{\Omega_1}  d_0(x)|\nabla u|^{p(x)}\,dx \geqslant m_0\int_{\Omega_1}|\nabla u|^{p(x)}\,dx\geqslant 0,$$
   $$  \int_{\Omega_1} d_0(x)\eta|\nabla u|^2\,dx\geqslant m_0\eta\|\nabla u\|_{L^2(\Omega_1)}^2\geqslant 0 $$
     and $$\rho_{p}(u)= \int_{\Omega}|u|^{p(x)}\,dx\geqslant0,$$
 we obtain
 
  \begin{equation}\label{lema1-2.eq2.u}
    \frac{1}{2}\frac{d}{dt}\|u\|_{H_0}^2 +\int_{\Omega}|u|^{p(x)}\,dx\leqslant L_B\|u\|_{H_0}^2,
  \end{equation}
  and,
  \begin{equation}\label{lema1-2.eq3.grad u}
    \frac{1}{2}\frac{d}{dt}\|u\|_{H_0}^2 +  m_0\int_{\Omega_1}|\nabla u|^{p(x)}\,dx \leqslant L_B\|u\|_{H_0}^2.
  \end{equation}
  Now integrating \eqref{lema1-2.eq2.u} with respect to $t\in[0,T],$ we have
  \begin{align*}
  &\frac{1}{2} \int_{0}^{T} \frac{d}{dt}\|u(t)\|_{H_0}^2\,dt + \int_{0}^{T}\int_{\Omega}|u(t)|^{p(x)}\,dx\, dt\leqslant \int_{0}^{T}L_B\|u(t)\|_{H_0}^2 \,dt 
  \end{align*}
  thus
  \begin{align*}
  \|u(T)\|_{H_0}^2 + 2\int_{0}^{T}\int_{\Omega}|u(t)|^{p(x)}\,dx\, dt\leqslant 2L_B\int_{0}^{T}\|u(t)\|_{H_0}^2 \,dt + \|u(0)\|_{H_0}^2,
\end{align*}
which implies
\begin{equation}\label{lema1-2.eq4}
  2\int_{0}^{T}\int_{\Omega}|u(t)|^{p(x)}\,dx\, dt\leqslant 2L_B\int_{0}^{T}\|u(t)\|_{H_0}^2 \,dt + \|u(0)\|_{H_0}^2.
\end{equation}
Neglecting  the second term on the left-hand side of \eqref{lema1-2.eq2.u} and integrating over $[s,t],$ where $0\leqslant s<t,$ we obtain

\begin{align}\label{estimativa_u}
   \|u(t)\|_{H_0}^2- \|u(s)\|_{H_0}^2 \leqslant 2L_B\int_{s}^{t}\|u(\theta)\|_{H_0}\,d\theta.
\end{align}
 Using the Gronwall's inequality, \cite[Corollary 6.6]{hale}, in \eqref{estimativa_u} we have
$$ \|u(t)\|_{H_0}^2\leqslant\|u(s)\|_{H_0}^2\exp(2L_B(t-s)), \text{ para }0\leqslant s<t.$$
In particular for $s=0,$ $$\|u(t)\|_{H_0}^2\leqslant\|u(0)\|_{H_0}^2\exp(2L_Bt)$$ and integrating with respect to $t\in[0,T]$, we get
\begin{align*}
  \int_{0}^{T}\|u(t)\|_{H_0}^2\,dt\leqslant & \int_{0}^{T}\|u(0)\|_{H_0}^2\exp(2L_Bt)\,dt \\
  = & \frac{1}{2L_B} \|u(0)\|_{H_0}^2(\exp(2L_BT)-1).
\end{align*}
Returning the equation \eqref{lema1-2.eq4} we can write
\begin{align*}
  \int_{0}^{T}\int_{\Omega}|u(t)|^{p(x)}\,dx\, dt \leqslant \frac{1}{2}\exp(2L_BT)\|u(0)\|_{H_0}^2.
\end{align*}
Similarly, it follows from \eqref{lema1-2.eq3.grad u}, that
\begin{equation*}
  \int_{0}^{T}\int_{\Omega}|\nabla u(t)|^{p(x)}\,dx\, dt \leqslant \frac{1}{2m_0}\exp(2L_BT)\|u(0)\|_{H_0}^2.
\end{equation*}
The result follows from do Lemma \ref{lema.Estimativa_Uniforme.Ellard}.
\end{proof}

\section{Existence of exponential attractor via l-trajectory method} 

The aim of this section is to prove that $(\{T_0(t)\}_{t\geqslant 0}, H_0)$ has an exponential attractor. In particular, this also implies that $(\{T_0(t)\}_{t\geqslant 0}, H_0)$ has a global attractor with finite fractal dimension. 

Consider $B_1= \{u\in D(A_0^{H_0}); \|u\|_{p(x)}\leqslant r \text{ and } \|\nabla u\|_{p(x)}\leqslant r\}$, by Lemma \ref{lema.limitacao_u_em_V} we have that $T_0(t)D(A_0^{H_0})\subset B_1$ for all $t\geqslant 2$. We define
$$B_0= \bigcup_{t\in [0,2]}T_0(t)B_1,
$$
we have to $B_0$ is positively invariant with respect to $\{T_0(t)\}_{t\geqslant 0},$ and is compact in $H$, since $B_1$ is compact.

\begin{remark}
For $\lambda\in ]0,1]$, $B_1= \{u\in D(A_\lambda^H); \|u\|_{p(x)}\leqslant r \text{ and } \|\nabla u\|_{p(x)}\leqslant r\}$ and $B_0= \bigcup_{t\in [0,2]}T_\lambda(t)B_1$.
\end{remark}

Let us denote by $\mathfrak{X}$
the set of all the solutions of \eqref{prob_pzero} defined in the interval $[0,1]$ equipped with the topology of space $L^2(0,1;H)$.


Consider $\{L(t)\}_{t\geqslant 0}$ the semigroup shift of $1$-trajectories,
that is, given $\chi \in \mathfrak{X}$, we defined
$$(L(t)\chi)(\tau)= u(t+\tau), \quad \forall\tau \in [0,1],
$$
where $u$ is the only solution with $\chi= u|_{[0,1]}$.

We will show that the semigroup $\{T_0(t)\}_{t\geqslant 0}$ associated with \eqref{prob_P_0_abstrato} admits exponential attractor via the $l$-trajectories method,
for this, we will consider
$$Y=\left\{\chi \in L^2(0,1;L^2(\Omega)); \chi \in L^2(0,1;V_0) \text{ and } \frac{d\chi}{dt}\in L^2(0,1; V^{'}_0)\right\},
$$
follows from  Aubin-Lions Lemma, see \cite{aubin-lions}, that $Y\hookrightarrow\hookrightarrow L^2(0,1;H_0)$. The $Y$ space is equipped with the norm
$$ \|u\|_{Y}=\|\nabla u\|_{L^2(0,1;L^2(\Omega))} + \|u_{t}\|_{L^2(0,1;V^{'}_0)}.
$$
Define $\mathcal{B}_0 =\{\chi\in \mathfrak{X}; \chi(0)\in B_0\}$,
by definition $\mathcal{B}_0 $ is positively invariant with respect to $\{L(t)\}_{t\geqslant 0}$.

\begin{lemma}
The set $\mathcal{B}_0=\{\chi\in \mathfrak{X}; \chi(0)\in B_0\} $ is compact in $L^2(0,1;H_0)$.
\end{lemma}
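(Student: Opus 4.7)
The plan is to combine the Aubin--Lions compactness embedding $Y\hookrightarrow\hookrightarrow L^2(0,1;H_0)$ (recorded just above the statement) with a closedness argument for $\mathcal{B}_0$, in order to deduce compactness in $L^2(0,1;H_0)$.

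First I would take an arbitrary sequence $\{\chi_n\}\subset \mathcal{B}_0$ and write $\chi_n = u_n|_{[0,1]}$, where $u_n$ is a solution of \eqref{prob_P_0_abstrato} with $u_n(0)\in B_0$. Since $B_0$ is compact in $H_0$, the sequence $\{u_n(0)\}$ is uniformly bounded in $H_0$; moreover, from the construction $B_0=\bigcup_{t\in[0,2]}T_0(t)B_1$ together with Lemma \ref{lema.limitacao_u_em_V}, the initial data also enjoys a uniform bound in $V_0$. Testing the abstract equation $\tfrac{du_n}{dt} + A_0 u_n = B(u_n)$ against $u_n$, invoking the coercivity of $A_0$ (Theorem \ref{Teo:H2Artigo2}) together with the global Lipschitz property of $B$, and arguing via Gronwall parallel to the proof of Lemma \ref{lema.limitacao_u_em_V}, would yield
\begin{equation*}
\|u_n\|_{L^\infty(0,1;H_0)} + \|u_n\|_{L^2(0,1;V_0)} \leqslant C.
\end{equation*}
Reading $(u_n)_t = B(u_n) - A_0 u_n$ off the equation and bounding $\|A_0 v\|_{V_0'}$ in terms of $\|v\|_{V_0}$ through the definition of $A_0$ then produces $\|(u_n)_t\|_{L^2(0,1;V_0')}\leqslant C'$, so that $\{\chi_n\}$ is bounded in $Y$.

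By the Aubin--Lions Lemma, there is a subsequence, not relabeled, converging strongly in $L^2(0,1;H_0)$ to some $\chi$. The remaining point is to identify $\chi$ as an element of $\mathcal{B}_0$. Since $B_0$ is compact in $H_0$, a further subsequence satisfies $u_n(0)\to u_0 \in B_0$ in $H_0$. Continuous dependence on initial data, which follows from monotonicity of $A_0$ combined with the Lipschitz property of $B$ via
\begin{equation*}
\frac{1}{2}\frac{d}{dt}\|u_n(t) - T_0(t)u_0\|_{H_0}^2 \leqslant L\,\|u_n(t) - T_0(t)u_0\|_{H_0}^2,
\end{equation*}
yields $u_n \to T_0(\cdot)u_0$ in $C([0,1];H_0)$, and therefore $\chi = T_0(\cdot)u_0|_{[0,1]}\in \mathcal{B}_0$.

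I expect the main obstacle to be not a single calculation but rather transferring the strong convergence in $L^2(0,1;H_0)$ into an identification of $\chi$ as a genuine trajectory of the semigroup. The monotonicity-plus-Lipschitz argument above sidesteps the need to pass to the limit in the nonlinear principal part $A_0 u_n$ directly; should this continuous-dependence route prove insufficient, one could alternatively exploit the subdifferential structure $A_0 = \partial \varphi$ and a Minty-type trick to identify the weak limit of $A_0 u_n$ with $A_0 \chi$.
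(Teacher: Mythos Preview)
Your overall scheme---bound the trajectories in a space that embeds compactly into $L^2(0,1;H_0)$, then show the set is closed---matches the paper, but the two implementations differ in both steps.

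For the time-derivative bound, the paper does not estimate $A_0 u_n$ in $V_0'$ at all. Instead it invokes the subdifferential regularity theorem (Brezis, Theorem~3.17): since $\chi(0)\in B_0\subset D(A_0^{H_0})$, one gets directly $\|\chi_t\|_{L^2(0,1;H_0)}\leqslant k_5$, which is a stronger bound than $L^2(0,1;V_0')$. Your route via $(u_n)_t=B(u_n)-A_0 u_n$ is viable, but be careful: $A_0$ has $(p(x)-1)$-growth, so an $L^2(0,1;V_0)$ bound on $u_n$ alone does \emph{not} yield $A_0 u_n\in L^2(0,1;V_0')$. You need an $L^\infty(0,1;V_0)$ bound, which indeed follows from propagating $\varphi(u_n(0))<\infty$ forward via $\frac{d}{dt}\varphi(u_n)\leqslant \tfrac12\|B(u_n)\|_{H_0}^2$, but this step is missing from your sketch as written.

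For closedness, the paper argues topologically: from $\chi_n\to\chi$ in $L^2(0,1;H_0)$ extract a.e.-in-$t$ convergence, note $\chi_n(t)\in B_0$ for every $t$, hence $\chi(t)\in \overline{B_0}=B_0$ a.e., and then continuity of $\chi$ gives $\chi(0)\in B_0$. Your continuous-dependence argument is cleaner in one respect: it explicitly identifies $\chi$ as $T_0(\cdot)u_0|_{[0,1]}$, hence a genuine element of $\mathfrak{X}$, whereas the paper's argument establishes $\chi(0)\in B_0$ and leaves the membership $\chi\in\mathfrak{X}$ implicit.
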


\begin{proof}
Note that $\mathcal{B}_0$ is  limited in $\{u\in L^2(0,1;V); \,u_t \in L^2(0,1;H_0)\}$. Indeed, if $\chi\in \mathcal{B}_0$ then $\chi (t)=T(t)\chi(0)$ where $\chi(0)\in D(A)$, using the Theorem 3.17 from \cite{BrezisF}, we have

\begin{align*}
\|\chi_t\|_{L^2(0,1;H_0)}^2=\int_0^1 \|(T(t)\chi(0))_t\|_{H_0}^2\, dt=\|(T(\cdot)\chi(0))_t\|_{L^2(0,1;H_0)}\leqslant k_5.
\end{align*}

In addition, it follows from Poincaré inequality that

\begin{align*}
\|\chi\|_{L^2(0,1;V)}^2=\int_0^1\, (\|\chi(t)\|_{p(x)}+\|\nabla\chi(t)\|_{p(x)})^2\, dt\leqslant C \int_0^1\|\nabla T(t)\chi(0)\|_{p(x)}^2\, dt.
\end{align*}
As $\chi(0)=u_0$ and $2<p^{-}\leqslant p(x)$, if $\|\nabla T(t)u_0\|_{p(x)}\geqslant 1$ we have

\begin{align*}
\|\nabla T(t)u_0\|_{p(x)}^2\leqslant \|\nabla T(t)u_0\|_{p(x)}^{p^{-}}\leqslant \rho_{p(\cdot)}(\nabla T(t)u_0).
\end{align*}
Thus,

\begin{align*}
\|\chi\|_{L^2(0,1;V)}^2\leqslant \int_0^1 \int_{\Omega}\, C |\nabla T(t)u_0|^{p(x)}\, dx dt\leqslant C \frac{e^{2L}}{2m_0}\|u_0\|_{H_0}^2,
\end{align*}

so that $\|\chi\|_{L^2(0,1;V)}^2\leqslant \min \biggl\{C, C\frac{e^{2L}}{2m_0}\|\chi(0)\|_{H_0}^2\biggr\}$.

As $\{u\in L^2(0,1;V); u_t\in L^2(0,1;H)\}\hookrightarrow\hookrightarrow L^2(0,1; H_0)$ we get  $\overline{\mathcal{B}_0}^{H_{0}}$ is compact in $L^2(0,1;H_0)$.

Consider $\{\chi_n\}_{n\in\mathbb{N}}\subset\mathcal{B}_0$ such that $\chi_n\to\chi$ in $L^2(0,1;H_0)$, restricting to a subsequence if necessary we have to $\chi_n(t)\to\chi(t)$, for almost all $t\in [0,1]$. Additionally $\chi_n(t)=T(t)\chi_n(0)\in B_0,$ $\forall n\in\mathbb{N}$, $\forall t\in[0,1]$ and $\chi(t)\in B_0$ for almost all  $t\in[0,1]$, in particular there is a sequence $\{t_n\}_{n\in\mathbb{N}}$ such that $t_n\to 0$ and $\chi(t_n)\in B_0,\, \forall n\in\mathbb{N}$. As $\chi: [0,1]\to L^2(\Omega)$ is continuous then $\chi(0)\in\overline{B_0}=B_0$ from which we conclude that $\mathcal{B}_0$ it is closed in $L^2(0,1;H_0)$.
\end{proof}

\begin{remark}
An adaptation of the demonstration in \cite{Tartar}, page 80, we get the Tartar inequality. Let $x,y\in\mathbb{R}^n$, then

\begin{align}\label{desiTartar}
\langle |x|^{p-2}x-|y|^{p-2}y, x-y\rangle\geqslant
\begin{cases}
\frac{2^{3-p}}{p} |x-y|^p, \,\, \quad\qquad \text{if}\,\,p\geqslant 2\\
(p-1)\frac{|x-y|^2}{(|x|^p+ |y|^p)^{2-p}},\,\,\,\text{if}\,\, 1< p<2.
\end{cases}
\end{align}

\end{remark}

We will now demonstrate the property of Lipschitz for $L(1): L^2(0,1;H_0) \to Y$ in $\mathcal{B}_0$, using  the fact that $z\mapsto |z|^{p(x)-2}$ is locally Lipschitz, for $p(x)>2.$ 
\begin{proposition}
  There is a constant $\rho >0$ such that
  $$ \|L(1)\chi_1- L(1)\chi_2\|_Y \leqslant \rho \|\chi_1 -\chi_2\|_{L^2(0,1;H_0)},
  $$
  for all $\chi_1,\chi_2\in \mathcal{B}_0$.
\end{proposition}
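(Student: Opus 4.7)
The plan is the following. Let $u_1,u_2$ be the global solutions of \eqref{prob_P_0_abstrato} with $\chi_i=u_i|_{[0,1]}\in\mathcal{B}_0$; then $L(1)\chi_i = u_i(1+\,\cdot\,)|_{[0,1]}$, so I need to bound $\|\nabla w\|_{L^2(1,2;L^2(\Omega))}$ and $\|w_t\|_{L^2(1,2;V_0')}$ by $\|w\|_{L^2(0,1;H_0)}$, where $w=u_1-u_2$. Subtracting the abstract equations, testing with $w(t)\in V_0$, and expanding $\langle A_0u_1-A_0u_2,w\rangle_{V_0',V_0}$ yields
\[
\tfrac{1}{2}\tfrac{d}{dt}\|w\|_{H_0}^2 + \int_{\Omega_1}d_0(x)\bigl[|\nabla u_1|^{p(x)-2}\nabla u_1-|\nabla u_2|^{p(x)-2}\nabla u_2\bigr]\!\cdot\!\nabla w\,dx + \eta\!\int_{\Omega_1}\!\!d_0(x)|\nabla w|^2\,dx + \mathcal{I}(u_1,u_2,w) = (B(u_1)-B(u_2),w)_{H_0},
\]
where $\mathcal{I}\ge 0$ by monotonicity of $u\mapsto|u|^{p(x)-2}u$, the first bracketed integral is nonnegative by Tartar's inequality \eqref{desiTartar}, and the $\eta$-term is bounded below by $m_0\eta\|\nabla w\|_{L^2(\Omega_1)}^2$. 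Since elements of $V_0$ are constant on $\Omega_0$, $\|\nabla w\|_{L^2(\Omega_1)}=\|\nabla w\|_{L^2(\Omega)}$, so that the Lipschitz property of $B$ gives the energy inequality
\[
\tfrac{d}{dt}\|w\|_{H_0}^2 + 2m_0\eta\|\nabla w\|_{L^2(\Omega)}^2 \le 2L_B\|w\|_{H_0}^2.
\]

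From the differential inequality $\tfrac{d}{dt}\|w\|_{H_0}^2\le 2L_B\|w\|_{H_0}^2$, Gronwall gives $\|w(t)\|_{H_0}^2\le e^{2L_B(t-s)}\|w(s)\|_{H_0}^2$ for $0\le s\le t$. Averaging over $s\in[0,1]$ yields, for every $t\in[1,2]$,
\[
\|w(t)\|_{H_0}^2 \le e^{4L_B}\,\|\chi_1-\chi_2\|_{L^2(0,1;H_0)}^2.
\]
Integrating the full energy inequality over $[s,2]$, then integrating in $s\in[0,1]$ and inserting the pointwise bound above to control $\int_s^2\|w\|_{H_0}^2\,d\tau$, produces
\[
\int_1^2\|\nabla w(t)\|_{L^2(\Omega)}^2\,dt \le C_1\|\chi_1-\chi_2\|_{L^2(0,1;H_0)}^2,
\]
which handles the first summand in the $Y$-norm.

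For the second summand, I write $w_t = -(A_0u_1-A_0u_2) + (B(u_1)-B(u_2))$ in $V_0'$ and pair against an arbitrary $v\in V_0$. Here the essential input is Lemma \ref{lema1-2}, which guarantees that the trajectories in $\mathcal{B}_0$ satisfy $|\nabla u_i(t,x)|\le 1$ and $|u_i(t,x)|\le 1$ a.e.; together with $p(x)>2$, this makes $z\mapsto|z|^{p(x)-2}z$ Lipschitz on the unit ball with constant $\le p^+-1$. Hence
\[
\bigl||\nabla u_1|^{p(x)-2}\nabla u_1-|\nabla u_2|^{p(x)-2}\nabla u_2\bigr|\le (p^+-1)|\nabla w|,\qquad \bigl||u_1|^{p(x)-2}u_1-|u_2|^{p(x)-2}u_2\bigr|\le (p^+-1)|w|,
\]
and, using boundedness of $d_0$ and Lipschitz continuity of $B$, I obtain
\[
|\langle w_t(t),v\rangle_{V_0',V_0}|\le C\bigl(\|\nabla w(t)\|_{L^2(\Omega)} + \|w(t)\|_{L^2(\Omega)}\bigr)\|v\|_{V_0},
\]
so $\|w_t(t)\|_{V_0'}\le C\bigl(\|\nabla w(t)\|_{L^2}+\|w(t)\|_{L^2}\bigr)$. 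Squaring, integrating on $[1,2]$, and combining with the previous step closes the estimate with some explicit constant $\rho>0$.

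The main obstacle is exactly the last step: the $p(x)$-Laplacian nonlinearity \textbf{does not} satisfy the global Lipschitz hypothesis \eqref{Equ_a} invoked in \cite{MatsuuraOtani}, and without the pointwise a.e. bounds from Lemma \ref{lema1-2} only a sub-linear (Hölder) estimate is available, which would fail to place $w_t$ in $L^2(1,2;V_0')$ with the required dependence on $\|\chi_1-\chi_2\|_{L^2(0,1;H_0)}$. The choice of $\mathcal{B}_0$ via Lemma \ref{lema.limitacao_u_em_V} and the a.e. bound of Lemma \ref{lema1-2} are what make the locally Lipschitz argument applicable, and the $\eta>0$ perturbation is what provides the coercive $\|\nabla w\|_{L^2}^2$-term in the energy identity.
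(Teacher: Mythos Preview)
Your proposal is correct and follows essentially the same route as the paper's proof: the energy identity with Tartar's inequality and the $\eta$-term to control $\|\nabla w\|_{L^2(1,2;L^2)}$, Gronwall plus averaging in $s\in[0,1]$ to pass from the trajectory norm to the pointwise bound, and Lemma~\ref{lema1-2} together with the local Lipschitz property of $z\mapsto|z|^{p(x)-2}z$ on the unit ball to estimate $\|w_t\|_{V_0'}$. The only cosmetic differences are that the paper presents the $V_0'$-estimate first and uses Poincar\'e to absorb $\|w\|_{L^2}$ into $\|\nabla w\|_{L^2}$, whereas you keep both terms; neither change affects the argument.
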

\begin{proof}
  Let $u$ and $v$ solutions of \eqref{prob_P_0_abstrato}  such that $u(0)=u_0\in B_0$ and $v(0)=v_0\in B_0$, then
  \begin{equation*}
    u_t +A_0^{H_0}u =Bu \quad \text{ and } \quad v_t +A_0^{H_0}v =Bv,
  \end{equation*}
  making the difference of the equations and denoting $w=u-v$ we get
  \begin{equation}\label{lema3.eq1}
    w_t +A_0^{H_0}u - A_0^{H_0}v =Bu -Bv.
  \end{equation}
  For $\theta \in [0,1]$ and $\psi \in V_0$
  \begin{align*}
    |(w_t(1+\theta),\psi)_{H_0}| & \leqslant |(A_0^{H_0}u(1+\theta) - A_0^{H_0}v(1+\theta), \psi)_{H_0}|+|(Bu(1+\theta) -Bv(1+\theta),\psi)_{H_0}| \\
     & \leqslant |\langle A_0 u(1+\theta) - A_0 v(1+\theta), \psi\rangle_{V_0^{'},V_0}|+\|Bu(1+\theta) -Bv(1+\theta)\|_{H_0}\|\psi\|_{H_0} \\
     & \leqslant |\langle A_0 u(1+\theta) - A_0 v(1+\theta), \psi\rangle_{V_0^{'},V_0}|+ L_B\|u(1+\theta) -v(1+\theta)\|_{H_0}\|\psi\|_{H_0}.
  \end{align*}
 where
\begin{align*}
  |\langle A_0  u(1+\theta) &- A_0 v(1+\theta), \psi\rangle_{V_0^{'},V_0}| \leqslant  \underset{\Omega}\int | |u(1+\theta)|^{p(x)-2}  u(1 +\theta)\\
  & \hspace{7,3cm} - |v(1+\theta)|^{p(x)-2}v(1+\theta)|\,|\psi | \,dx\\
  &+ \underset{\Omega_1}\int |d_0(x)[(|\nabla u(1+\theta)|^{p(x)-2}+\eta)\nabla u(1+\theta)\\
  & \hspace{5cm} -(|\nabla v(1+\theta|)^{p(x)-2}+\eta)\nabla v(1+\theta)]||\nabla \psi| \,dx.
\end{align*}
Using the Lemma \ref{lema1-2} and the fact $z\to |z|^{p(x)-2}z$ be locally lipschitz, since $p(x)>2$, there exists $\beta>0$ so that
\begin{align*}
  |\,\,|u(1+\theta)|^{p(x)-2}u(1+\theta)- |v(1+\theta)|^{p(x)-2}v(1+\theta)|\leqslant & \beta |u(1+\theta)- v(1+\theta)|\quad \text{ and} \\
 | \,\,|\nabla u(1+\theta)|^{p(x)-2}\nabla u(1+\theta)- |\nabla v(1+\theta)|^{p(x)-2}\nabla v(1+\theta)| \leqslant & \beta|\nabla u(1+\theta)- \nabla v(1+\theta)|,
\end{align*}
almost all $(t, x)\in [0,1]\times\Omega$.
By  Hölder's inequality,  in $H_0$, we have
\begin{align*}
  |\langle A_0 u(1+\theta) - A_0 v(1+\theta), \psi\rangle_{V_0^{'},V_0}| \leqslant M_0(\beta+\eta)\|\nabla (u(1 &+\theta) -v(1+\theta))\| \|\nabla\psi\|\\
  & + \beta\|u(1+\theta) -v(1+\theta)\|\|\psi\|,
\end{align*}
and so in space $H_0$ we obtain
\begin{align*}
  |(w_t(1+\theta),\psi)_{H_0}|\leqslant M_0(\beta+\eta)\|\nabla (u(1+\theta)  &-v(1+\theta))\| \|\nabla\psi\|\\& + (\beta +L_B)\|u(1+\theta) -v(1+\theta)\|\|\psi\|.
\end{align*}
It follows from Poincaré's inequality there exists $\alpha>0$ such that
\begin{equation*}
  |(w_t(1+\theta),\psi)_{H_0}| \leqslant (M_0(\beta +\eta) + \alpha^2 (\beta +L_B)) \|\nabla (u(1+\theta) -v(1+\theta))\|_{H_0} \|\nabla\psi\|_{H_0},
\end{equation*}
with $\gamma = M_0(\beta +\eta) + \alpha^2 (\beta +L_B)$. Denoting by $\chi_1= u\mid_{[0,1]}$ and $\chi_2= v\mid_{[0,1]}$ we get
\begin{align*}
  \|(L(1)\chi_1- L(1)\chi_2)_{t}\|_{L^2(0,1;V^{'}_0)}^2 =& \int_{0}^{1}\|w_t(1+\theta)\|_{V_0^{'}}^2\,d\theta \\
   =& \int_{0}^{1}\left(\sup_{\substack{ \|\psi\|_{V_0} \leqslant 1}} |(w_t(1+\theta),\psi)_{H_0}|\right)^2\,d\theta \\
   \leqslant & \int_{0}^{1}(\alpha_1 \gamma)^2 \|\nabla w(1+\theta)\|_{H_0}^2\,d\theta,
\end{align*}
where $\alpha_1$ is the constant of continuous immersion $L^{p(x)}(\Omega)\hookrightarrow L^2(\Omega)$. Therefore
$$\|(L(1)\chi_1- L(1)\chi_2)_{t}\|_{L^2(0,1;V^{'}_0)} \leqslant \alpha_1\gamma \|\nabla w(1+ \cdot)\|_{L^2(0,1;H_0)}.
$$
In order to estimate the term $\|\nabla w(1+ \cdot)\|_{L^2(0,1;H_0)}$
we make the product $(\cdot, w)_{H_0}$
in the equation \eqref{lema3.eq1}, so,
\begin{equation}\label{lema3.eq2}
  (w_t,w)_{H_0}+ (A_0^{H_0}u - A_0^{H_0}v,w)_{H_0} =(Bu -Bv,w)_{H_0}.
\end{equation}

By Tartar's inequality, \eqref{desiTartar}, we can conclude 
\begin{align*}
  (A_0^{H_0}u -  &A_0^{H_0}v,w)_{H_0}=  \langle A_0u - A_0 v,w\rangle_{V_0^{'},V_0} \\
  = & \int_{\Omega_1} d_0(x)\left( |\nabla u|^{p(x)-2}\nabla u - |\nabla v|^{p(x)-2}\nabla v \right)\nabla w \, dx + \int_{\Omega_1}d_0(x)\eta(\nabla u- \nabla v)\nabla w \, dx \\
   & \hspace*{8cm} + \int_{\Omega} (|u|^{p(x)-2}u- |v|^{p(x)-2}v)w \,dx\\
  \geqslant & m_0 \int_{\Omega_1} \frac{2^{3-p(x)}}{p(x)} |\nabla w|^{p(x)} \,dx + m_0\eta\int_{\Omega_1} |\nabla w|^2 \,dx + \int_{\Omega} \frac{2^{3-p(x)}}{p(x)} |w|^{p(x)} \,dx\\
  \geqslant & m_0 \frac{2^{3-p^{+}}}{p^{+}} \int_{\Omega_1} |\nabla w|^{p(x)} \,dx + m_0\eta\int_{\Omega_1} |\nabla w|^2 \,dx + \frac{2^{3-p^{+}}}{p^{+}} \int_{\Omega} |w|^{p(x)} \,dx\\
  \geqslant &  m_0\eta\int_{\Omega_1} |\nabla w|^2 \,dx,
\end{align*}
once  $\rho_{p}(w)$ and $\rho_{p}(\nabla w)$
are non-negatives. So coming back to \eqref{lema3.eq2} 
\begin{align*}
   \frac{1}{2}\frac{d}{dt}\|w\|_{H_0}^2 +m_0\eta\int_{\Omega_1}|\nabla w|^2\,dx &\leqslant (w_t,w)_{H_0}+(A_0^{H_0}u - A_0^{H_0}v,w)_{H_0} =(Bu -Bv,w)_{H_0} \\
   & \leqslant  |(Bu -Bv,w)_{H_0}| \leqslant \|Bu -Bv\|_{H_0}\|w\|_{H_0} \\
   &\leqslant L_B \|u-v\|_{H_0}\|w\|_{H_0} = L_B \|w\|_{H_0}^2,
\end{align*}
that is,
\begin{equation}\label{lema3.eq3}
  \frac{1}{2}\frac{d}{dt}\|w(t)\|_{H_0}^2 +m_0\eta\int_{\Omega_1}|\nabla w(t)|^2\,dx \leqslant L_B \|w(t)\|_{H_0}^2.
\end{equation}

Neglecting the second term of the sum in \eqref{lema3.eq3} and integrating for $\theta$ varying in interval $[s,t]$, where $0\leqslant s < t$, we get
\begin{align*}
  \|w(t)\|_{H_0}^2 \leqslant \|w(s)\|_{H_0}^2 + 2L_B\int_{s}^{t}\|w(\theta)\|_{H_0}^2 \,d\theta,
\end{align*}
applying the Gronwall's inequality, it follows that
\begin{equation}\label{lema3.eq4}
  \|w(t)\|_{H_0}^2 \leqslant \|w(s)\|_{H_0}^2\exp(2L_B(t-s)), \quad \text{ for }\,\, 0\leqslant s <t.
\end{equation}
Returning the inequality \eqref{lema3.eq3} and integrating in interval $[\tau,2]$, $\tau \in [0,1]$, we get
\begin{align*}
  \|w(2)\|_{H_0}^2 +2m_0\eta\int_{\tau}^{2} \int_{\Omega_1}|\nabla w(\theta)|^2\,dx\,d\theta \leqslant 2L_B \int_{\tau}^{2}\|w(\theta)\|_{H_0}^2\,d\theta  +\|w(\tau)\|_{H_0}^2,
\end{align*}
that is,
\begin{equation}\label{lema3.eq5}
  2m_0\eta\int_{\tau}^{2} \int_{\Omega_1}|\nabla w(\theta)|^2\,dx\,d\theta \leqslant 2L_B \int_{\tau}^{2}\|w(\theta)\|_{H_0}^2\,d\theta  +\|w(\tau)\|_{H_0}^2.
\end{equation}
As $0\leqslant\tau\leqslant\theta\leqslant 2$ then by \eqref{lema3.eq4}, $$\|w(\theta)\|_{H_0}^2 \leqslant \|w(\tau)\|_{H_0}^2\exp(2L_B(\theta-\tau)),$$ and  integrating the last inequality for $\theta \in [\tau,2],$ we have
\begin{align*}
  \int_{\tau}^{2}\|w(\theta)\|_{H_0}^2 \,d\theta \leqslant  & \int_{\tau}^{2}\|w(\tau)\|_{H_0}^2\exp(2L_B(\theta-\tau))\,d\theta \\
  = & \frac{1}{2L_B}\|w(\tau)\|_{H_0}^2 (\exp(2L_B(2-\tau))-1)
\end{align*}
replacing in \eqref{lema3.eq5} and using the growth of the function $\exp(\cdot)$, we get

\begin{equation*}
  2m_0\eta\int_{\tau}^{2} \int_{\Omega_1}|\nabla w(\theta)|^2\,dx\,d\theta \leqslant \exp(4L_B)\|w(\tau)\|_{H_0}^2,
\end{equation*}
for all $\tau \in [0,1]$. For all $\tau\in [0,1]$, we conclude 
\begin{align*}
  \int_{0}^{1}\|\nabla w(1+\theta)\|_{L^2}^2\,d\theta =&\int_{0}^{1}\|\nabla w(1+\theta)\|_{H_0}^2\,d\theta = \int_{1}^{2}\|\nabla w(\theta)\|_{H_0}^2\,d\theta  \\
  \leqslant & \int_{\tau}^{2}\|\nabla w(\theta)\|_{H_0}^2\,d\theta \\
  \leqslant & \frac{\exp(4L_B)}{2m_0\eta}\|w(\tau)\|_{H_0}^2,
\end{align*}
and by integrating about $\tau$ in $[0,1]$, we obtain
\begin{align*}
  \|\nabla w(1 + \cdot)\|_{L^2(0,1;H_0)}^2=\int_{0}^{1}\|\nabla w(1+\theta)\|_{H_0}^2\,d\theta &\leqslant \int_{0}^{1} \frac{\exp(4L_B)}{2m_0\eta}\|w(\tau)\|_{H_0}^2\,d\tau \\&=\frac{\exp(4L_B)}{2m_0\eta} \|w\|_{L^2(0,1;H_0)}^2.
\end{align*}
So we have
\begin{align*}
  \|L(1)\chi_1- L(1)\chi_2\|_Y =&\|\nabla (L(1)\chi_1-L(1)\chi_2)\|_{L^2(0,1;H_0)} + \|(L(1)\chi_1-L(1)\chi_2)_t\|_{L^2(0,1; V_0^{'})} \\
  \leqslant & \|\nabla w(1 + \cdot)\|_{L^2(0,1;H_0)} + \alpha_1\gamma\|\nabla w(1 + \cdot)\|_{L^2(0,1;H_0)} \\
  \leqslant & (1+\alpha_1\gamma)\sqrt{\exp(4L_B)(2m_0\eta)^{-1}} \|w\|_{L^2(0,1;H_0)} \\
  =& \rho_1 \left(\int_{0}^{1}\|u(\theta)-v(\theta)\|_{H_0}^{2} \,d\theta\right)^{\frac{1}{2}}\\
  = & \rho_1 \left(\int_{0}^{1}\|\chi_1(\theta)-\chi_2(\theta)\|_{H_0}^{2} \,d\theta\right)^{\frac{1}{2}}\\
  = & \rho_1 \|\chi_1-\chi_2\|_{L^2(0,1;H_0)},
\end{align*}
where  $\rho_1= (1+\alpha_1\gamma)\sqrt{\exp(4L_B)(2m_0\eta)^{-1}}$.
\end{proof}
In a similar way, we have that the application
\begin{align*}
\textrm{e}: & \mathfrak{X} \rightarrow  H_0\\
       & \chi          \mapsto     \chi(1)
\end{align*}
is Lipschitz continuous. In fact, from \eqref{lema3.eq4} we get
\begin{align*}
\|w(1)\|_{H_0}^2 \leqslant \|w(\theta)\|_{H_0}^2\exp(2L_B(1-\theta)) \leqslant \|w(\theta)\|_{H_0}^2\exp(2L_B), \quad \forall \theta \in [0,1].
\end{align*}

Integrating this last inequality, to $\theta$ varying in $[0,1]$, we have
\begin{align*}
& \int_{0}^{1}\|w(1)\|_{H_0}^2 \,d\theta \leqslant \int_{0}^{1}\|w(\theta)\|_{H_0}^2\exp(2L_B)\,d\theta,
\end{align*}
that is,
\begin{align}\label{estimativa_w(1)}
 \|w(1)\|_{H_0} \leqslant \exp(L_B) \left( \int_{0}^{1}\|w(\theta)\|_{H_0}^2\,d\theta \right)^{\frac{1}{2}}= \exp(L_B) \|w\|_{L^2(0,1;H_0)}.
\end{align}
Therefore, given any $\chi_1,\chi_2 \in \mathcal{B}_0$ there are $u$ and $v$ solutions of \eqref{prob_P_0_abstrato} with $u(0),v(0)\in B_0$ such that $u|_{[0,1]}=\chi_1$ and $v|_{[0,1]}=\chi_2$, making $w=u-v$,  from \eqref{estimativa_w(1)} follows that
\begin{align*}
 \|\textrm{e}(\chi_1)- \textrm{e}(\chi_2)\|_{H_0} =& \|\chi_1(1)-\chi_2(1)\|_{H_0} = \|w(1)\|_{H_0}\\
 \leqslant & \exp(L_B) \|w\|_{L^2(0,1;H_0)}= \exp(L_B) \|u-v\|_{L^2(0,1;H_0)} \\
 =& \exp(L_B) \|\chi_1-\chi_2\|_{L^2(0,1;H_0)}.
\end{align*}

\begin{proposition} 
  Exists $c_3>0$ such that
  $$\|L(s)\chi_1-L(t)\chi_2\|_{L^2(0,1;H_0)} \leqslant c_3(|s-t|^{\frac{1}{2}}+ \|\chi_1-\chi_2\|_{L^2(0,1;H_0)} ),
  $$
  for all $t,s\in[0,1]$ and for all $\chi_1,\chi_2\in \mathcal{B}_0$.
\end{proposition}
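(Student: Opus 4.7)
The plan is to split the target bound via the triangle inequality
\begin{align*}
\|L(s)\chi_1 - L(t)\chi_2\|_{L^2(0,1;H_0)}
&\leq \|L(s)\chi_1 - L(s)\chi_2\|_{L^2(0,1;H_0)} \\
&\quad + \|L(s)\chi_2 - L(t)\chi_2\|_{L^2(0,1;H_0)},
\end{align*}
and to treat the two summands separately: the first by Lipschitz continuity of $L(s)$ in its argument uniformly in $s\in[0,1]$, the second by H\"older-$1/2$ continuity of the time shift uniformly over $\chi_2\in\mathcal{B}_0$.

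For the first summand, let $u,v$ be the solutions of \eqref{prob_P_0_abstrato} with $u|_{[0,1]}=\chi_1$, $v|_{[0,1]}=\chi_2$, and set $w=u-v$. Applying the Gronwall-type estimate \eqref{lema3.eq4} already established, with the pair $(\tau,s+\tau)$ in place of $(s,t)$, gives $\|w(s+\tau)\|_{H_0}^2\leq\|w(\tau)\|_{H_0}^2\exp(2L_B s)$ for every $\tau\in[0,1]$. Integrating in $\tau$ over $[0,1]$ and taking square roots yields
\begin{equation*}
\|L(s)\chi_1 - L(s)\chi_2\|_{L^2(0,1;H_0)} \leq \exp(L_B)\,\|\chi_1-\chi_2\|_{L^2(0,1;H_0)}
\end{equation*}
uniformly in $s\in[0,1]$.

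For the second summand, assume without loss of generality $s\leq t$, and let $u$ be the solution with $u|_{[0,1]}=\chi_2$, so that $u(0)=\chi_2(0)\in B_0$. Positive invariance of $B_0$ together with the smoothing bound from \cite[Theorem 3.17]{BrezisF} already used in the compactness proof of $\mathcal{B}_0$, applied on both $[0,1]$ and on $[1,2]$ (starting from $T(1)\chi_2(0)\in B_0$), furnishes a constant $K=K(B_0)$ such that $\|u_t\|_{L^2(0,2;H_0)}\leq K$ for every such $u$. Writing $u(t+\tau)-u(s+\tau)=\int_{s+\tau}^{t+\tau}u_\theta(\theta)\,d\theta$ and applying Cauchy--Schwarz gives
\begin{equation*}
\|u(s+\tau)-u(t+\tau)\|_{H_0}^2 \leq |t-s|\int_{s+\tau}^{t+\tau}\|u_\theta(\theta)\|_{H_0}^2\,d\theta.
\end{equation*}
Integrating in $\tau\in[0,1]$ and swapping the order of integration, the $\tau$-measure of $\{\tau\in[0,1]:s+\tau\leq\theta\leq t+\tau\}$ is bounded by $|t-s|$, so
\begin{equation*}
\|L(s)\chi_2 - L(t)\chi_2\|_{L^2(0,1;H_0)}^2 \leq |t-s|^{2} K^{2},
\end{equation*}
and since $|t-s|\leq 1$ one obtains $\|L(s)\chi_2-L(t)\chi_2\|_{L^2(0,1;H_0)}\leq K\,|t-s|^{1/2}$. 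Setting $c_3=\max\{\exp(L_B),K\}$ closes the argument.

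The main obstacle I expect is securing the uniform bound $\|u_t\|_{L^2(0,2;H_0)}\leq K$ over all $\chi_2\in\mathcal{B}_0$. This is where the positive invariance of the absorbing set $B_0$ is essential, since it allows the Brezis smoothing estimate already invoked on $[0,1]$ in the compactness lemma to be re-applied on $[1,2]$ from the datum $T(1)\chi_2(0)\in B_0$, producing a single constant $K$ that depends only on $B_0$ and not on the particular $\chi_2\in\mathcal{B}_0$.
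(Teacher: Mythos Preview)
Your proof is correct and follows essentially the same approach as the paper: both split via the triangle inequality into a Lipschitz-in-argument piece (handled by the Gronwall estimate \eqref{lema3.eq4}) and a H\"older-in-time piece (handled via the bound $u_t\in L^2$ coming from \cite[Theorem~3.17]{BrezisF} together with the positive invariance of $B_0$). The only organizational difference is that the paper first establishes the pointwise estimate $\|T_0(s)\tilde u_0-T_0(t)\tilde v_0\|_{H_0}\leqslant c(|s-t|^{1/2}+\|\tilde u_0-\tilde v_0\|_{H_0})$ for $\tilde u_0,\tilde v_0\in B_0$ and then integrates over $\theta$, whereas your Fubini argument on the time piece works directly at the trajectory level and actually yields the slightly sharper linear bound $K|t-s|$ before invoking $|t-s|\leqslant|t-s|^{1/2}$.
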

\begin{proof}
 First, lets go to prove that exists $c(T)>0$ such that
  $$\|T_0(s)u_0-T_0(t)v_0\|_{H_0} \leqslant c(T)(|s-t|^{\frac{1}{2}}+ \|u_0-v_0\|_{H_0} ),
  $$
  for all $s,t\in[0,T]$ and $u_0,v_0\in B_0$. In fact, for any $s,t\in[0,T]$ and $u_0,v_0\in B_0$
  \begin{equation}\label{prop(A9)(A10).eq1}
    \|T_0(s)u_0-T_0(t)v_0\|_{H_0}\leqslant \|T_0(s)u_0-T_0(t)u_0\|_{H_0}+\|T_0(t)u_0-T_0(t)v_0\|_{H_0}
  \end{equation}
  It follows from the Fundamental Theorem of Calculus, Hölder's Inequality and Fubini's Theorem that
  \begin{align*}
    \|T_0(s)u_0 - T_0(t)u_0\|_{H_0}^2 & = \int_{\Omega}|T_0(s)u_0(x)-T_0(t)u_0(x)|^2 \,dx\\
    & = \int_{\Omega}\left| \int_{t}^{s}(T_0(\theta)u_0(x))_\theta \,d\theta \right|^2 \,dx \\
    &\leqslant \int_{\Omega}\left( \int_{t}^{s}|(T_0(\theta)u_0(x))_\theta| \,d\theta \right)^2 \,dx \\
    &\leqslant \int_{\Omega} \left(\int_{t}^{s}|(T_0(\theta)u_0(x))_\theta|^2 \,d\theta |s-t|\right) \,dx\\
   & \leqslant  |s-t|\int_{\Omega}\int_{t}^{s}|(T_0(\theta)u_0(x))_\theta|^2 \,d\theta \,dx \\
    &= |s-t|\int_{t}^{s}\int_{\Omega}|(T_0(\theta)u_0(x))_\theta|^2  \,dx \,d\theta\\
   & =  |s-t|\int_{t}^{s}\|(T_0(\theta)u_0)_\theta\|_{H_0}^2 \,d\theta\\
    & \leqslant  |s-t|\int_{0}^{T}\|(T_0(\theta)u_0)_\theta\|_{H_0}^2 \,d\theta\\
     & = |s-t|\|(T_0(\cdot)u_0)_t\|_{L^2(0,T;H_0)}^2.
  \end{align*}
  As $u_0\in B_0 \subset D(A_0^{H_0}),$ then by consequence of the Theorem 3.17 in \cite{BrezisF}
  \begin{equation*}
    \|(T_0(\cdot)u_0)_t\|_{L^2(0,T;H_0)}\leqslant c_1(T).
  \end{equation*}
  In addition, it follows from \eqref{lema3.eq4} that
  \begin{equation*}
    \|T_0(t)u_0-T_0(t)v_0\|_{H_0}\leqslant c_2(T)\|u_0-v_0\|_{H_0}.
  \end{equation*}
  Returning \eqref{prop(A9)(A10).eq1} follow that
  \begin{equation}\label{prop(A9)(A10).eq2}
    \|T_0(s)u_0-T_0(t)v_0\|_{H_0}\leqslant c_1(T) |s-t|^{\frac{1}{2}} +c_2(T)\|u_0-v_0\|_{H_0}.
  \end{equation}
 In its turn
  \begin{align*}
    \|L(s)\chi_1-L(t)\chi_2\|_{L^2(0,1;H_0)}^2=&\int_{0}^{1} \|T_0(s+\theta)u_0-T_0(t+\theta)v_0\|_{H_0}^2 \,d\theta  \\
    = & \int_{0}^{1} \|T_0(s)T_0(\theta)u_0-T_0(t)T_0(\theta)v_0\|_{H_0}^2 \,d\theta,
  \end{align*}
  note that $u_0=\chi_1(0)\in B_0$, $v_0=\chi_2(0)\in B_0$, and $B_0$ be positively invariant with respect to $\{T_0(t)\}_{t\geqslant 0}$, then $T_0(\theta) u_0,T_0(\theta) v_0\in B_0$. So from \eqref{prop(A9)(A10).eq2} we have
  \begin{align*}
    \|L(s)\chi_1-L(t)\chi_2\|_{L^2(0,1;H_0)}^2\leqslant & \int_{0}^{1} (c_1(1)+c_2(1))^2 ( |s-t|^{\frac{1}{2}} +\|T_0(\theta)u_0-T_0(\theta)v_0\|_{H_0})^2 \,d\theta \\
    \leqslant & \int_{0}^{1} 2^2(c_1(1)+c_2(1))^2 ( |s-t| +\|T_0(\theta)u_0-T_0(\theta)v_0\|_{H_0}^2) \,d\theta \\
    =&  4(c_1(1)+c_2(1))^2 ( |s-t| + \int_{0}^{1}\|T_0(\theta)u_0-T_0(\theta)v_0\|_{H_0}^2 \,d\theta )\\
    =& 4(c_1(1)+c_2(1))^2(|s-t|+ \|\chi_1-\chi_2\|_{L^2(0,1;H_0)}^2 ).
  \end{align*}
  Therefore, for $c_3=2(c_1(1)+c_2(1))$ follows the result.
\end{proof}

Now, we established the main result of this paper.
\begin{theorem}\label{Teorema-existencia-exponential-attractor}
 The dynamical systems associate to \eqref{prob_plambda}-\eqref{prob_pzero} possesses the global attractor $\mathcal{A}_\lambda$, for all $\lambda\in[0,1]$,  which is bounded in $L^2(\Omega)$. Moreover, for each $\lambda\in[0,1]$ there exists a positively invariant subset $B$ of $L^2(\Omega)$ such that $\mathcal{A}_{\lambda}\subset B$ and the dynamical systems admits an exponential attractor $\mathcal{E}_\lambda$.
\end{theorem}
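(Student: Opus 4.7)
The plan is to split the statement into two parts: first obtain the global attractor $\mathcal{A}_\lambda$ in $L^2(\Omega)$, then apply the abstract $l$-trajectory theorem of Málek--Pražák on the shift semigroup $\{L(t)\}_{t\geqslant 0}$ acting on $\mathcal{B}_0$, and finally transport the exponential attractor back to $H_0$ (resp.\ $H$) via the evaluation map $\mathrm{e}$. I would work out the case $\lambda=0$ in detail and observe that all arguments carry over word-by-word to $\lambda\in(0,1]$ because every estimate in Section~3 and every Lipschitz/H\"older bound of Section~4 is uniform in $\lambda$ (all constants depend only on $m_0$, $\eta$, $p^{\pm}$ and $L_B$).

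\medskip

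For the \emph{global attractor}, I would combine Lemma \ref{lema.limitacao_u_em_V} with the compactness $V_0\hookrightarrow\hookrightarrow H_0$. The lemma yields an absorbing ball $B_1$ in $V_0$ (hence precompact in $H_0$), so the set $B=B_0=\bigcup_{t\in[0,2]}T_0(t)B_1$ is positively invariant, compact in $H_0$ and absorbing for every bounded subset of $H_0$. The classical result (see e.g.\ Temam) then provides the global attractor $\mathcal{A}_0=\bigcap_{s\geqslant 0}\overline{\bigcup_{t\geqslant s}T_0(t)B_0}$, which is bounded in $L^2(\Omega)$ and contained in $B_0$.

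\medskip

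For the \emph{exponential attractor}, I would apply the abstract $l$-trajectory criterion of Málek--Pražák to $(\{L(t)\}_{t\geqslant 0},\mathcal{B}_0)$ in $L^2(0,1;H_0)$. The hypotheses have all been verified in the excerpt: the set $\mathcal{B}_0$ is compact and positively invariant in $L^2(0,1;H_0)$ by the preceding Lemma; the map $L(1):L^2(0,1;H_0)\to Y$ is Lipschitz on $\mathcal{B}_0$ with constant $\rho_1$ by the first Proposition; the embedding $Y\hookrightarrow\hookrightarrow L^2(0,1;H_0)$ is compact by Aubin--Lions; and $t\mapsto L(t)\chi$ is $\tfrac12$-H\"older uniformly on $\mathcal{B}_0$ by the second Proposition. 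Consequently, the abstract theorem furnishes an exponential attractor $\mathcal{M}\subset\mathcal{B}_0$ for $\{L(t)\}_{t\geqslant 0}$: $\mathcal{M}$ is compact in $L^2(0,1;H_0)$, has finite fractal dimension, is positively invariant and attracts $\mathcal{B}_0$ (hence every bounded subset of $\mathcal{B}_0$) exponentially in $L^2(0,1;H_0)$.

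\medskip

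The remaining step, which I expect to be the only delicate point, is to push $\mathcal{M}$ down to $H_0$. I would define $\mathcal{E}_0:=\mathrm{e}(\mathcal{M})\subset B_0$ and check the four conditions of the definition. Compactness of $\mathcal{E}_0$ follows from compactness of $\mathcal{M}$ and continuity of $\mathrm{e}$. Finite fractal dimension is preserved because $\mathrm{e}$ is Lipschitz on $\mathcal{B}_0$ (shown right before the second Proposition), so $\dim_{f}\mathcal{E}_0\leqslant\dim_{f}\mathcal{M}<\infty$. Positive invariance uses the intertwining identity $T_0(t)\,\mathrm{e}(\chi)=\mathrm{e}(L(t)\chi)$ for $\chi\in\mathfrak{X}$ (immediate from $\mathrm{e}(\chi)=\chi(1)=u(1)$ and the semigroup property of $T_0$): it gives $T_0(t)\mathcal{E}_0=T_0(t)\mathrm{e}(\mathcal{M})=\mathrm{e}(L(t)\mathcal{M})\subset\mathrm{e}(\mathcal{M})=\mathcal{E}_0$. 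Exponential attraction in $H_0$ follows from the Lipschitz property of $\mathrm{e}$ combined with the Hausdorff estimate $\mathrm{dist}_{H_0}(T_0(t)B_0,\mathcal{E}_0)\leqslant\mathrm{Lip}(\mathrm{e})\cdot\mathrm{dist}_{L^2(0,1;H_0)}(L(t-1)\mathcal{B}_0,\mathcal{M})\leqslant c_1 e^{-c_2 t}$ for $t\geqslant 1$, after observing that any $u_0\in B_0$ generates a trajectory $\chi\in\mathcal{B}_0$ with $\mathrm{e}(L(t-1)\chi)=T_0(t)u_0$. This delivers $\mathcal{E}_0$ as an exponential attractor for $(\{T_0(t)\}_{t\geqslant 0},B_0)$, and since $\mathcal{A}_0\subset B_0\subset H_0$, the theorem is proved for $\lambda=0$. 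The case $\lambda\in(0,1]$ proceeds identically with $A_0,V_0,H_0,T_0$ replaced by $A_\lambda^H,V,H,T_\lambda$, the Remark after the definition of $B_0$ ensuring $B_0$ and $\mathcal{B}_0$ are constructed uniformly.
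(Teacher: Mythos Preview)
Your proposal is correct and follows essentially the same route as the paper: verify the hypotheses of the M\'alek--Pra\v{z}\'ak abstract $l$-trajectory theorems on $(\{L(t)\}_{t\geqslant 0},\mathcal{B}_0)$ using the lemmas and propositions of Section~4, obtain an exponential attractor for the shift semigroup, and then transfer it to $(\{T_0(t)\}_{t\geqslant 0},B_0)$ via the Lipschitz evaluation map $\mathrm{e}$. The paper's proof simply cites Theorems~2.5 and~2.6 of \cite{MalekPrazak} directly, whereas you spell out the content of the transfer step (compactness, finite dimension, invariance, exponential attraction of $\mathcal{E}_0=\mathrm{e}(\mathcal{M})$) and also make the global-attractor part explicit; both are harmless elaborations rather than a different argument.
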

\begin{proof}
In consequence of Theorem 2.5 in \cite{MalekPrazak} we conclude that the dynamics of systems $\{L(t)\}_{t\geqslant 0}$ admits an exponential attractor in $\mathcal{B}_0$. Also by Theorem 2.6 in \cite{MalekPrazak} we have $\{T_0(t)\}_{t\geqslant 0}$ has an exponential attractor in $\textrm{e}(\mathcal{B}_0)$. Proceeding in an analogous way we conclude the existence of an exponential attractor for the dynamics $\{T_\lambda(t)\}_{t\geqslant 0}$, where $\lambda \in (0,1]$. 
\end{proof}


\end{document}